\newcommand{\R}{{\rm I}\kern-0.18em{\rm R}}
\newtheorem {theoremF}{Theorem}
\newtheorem {lemmaF}[theoremF]{Lemma}
\newtheorem {definF}{Definition}
\newcommand{\Rg}       {{\hbox{I\kern-.22em\hbox{R}}}}
\newcommand{\Pg}       {{\hbox{I\kern-.22em\hbox{P}}}}
\newcommand{\Eg}       {{\hbox{I\kern-.22em\hbox{E}}}}
\newcommand{\eps}{\varepsilon}
\newtheorem{theoremK}{Theorem}
\newtheorem{propositionK}[theoremK]{Proposition}
\theoremstyle{definition}
\begin{document}

\setcounter{section}{0}

\setcounter{equation}{0}



\begin{center}
{\LARGE \bf Ergodic properties of Fractional Stochastic Burgers
Equation}

\vspace{5mm}



{\bf Zdzis{\l}aw  Brze{\'z}niak, Latifa  Debbi and Ben Goldys}\\

\vspace{3mm}

Department of Mathematics, University of York, Heslington, York YO10
5DD, UK.\\
E-mail: zb500@york.ac.uk

\vspace{3mm}

Department Mathematics and Information Technology Chair of Applied
mathematics, Montan University 8700 Leoben,
Franz-Josef-Strasse 18 Austria.\\
E-mail:ldebbi@yahoo.fr and latifa.debbi@unileoben.ac.at

\vspace{3mm}

School of Mathematics, The University of New South Wales, Sydney NSW 2052, Australia.\\
E-mail: b.goldys@unsw.edu.au

 \vspace{2mm}


\vspace{1mm}

\end{center}

\begin{abstract}
We prove the existence and uniqueness of invariant measures for the
fractional stochastic Burgers equation (FSBE) driven by fractional
power of the Laplacian and space-time white noise. We show also that
the transition measures of the solution converge to the invariant
measure in the norm of total variation. To this end we show first
two results which are of independent interest: that the semigroup
corresponding to the solution of the FSBE is strong Feller and
irreducible.

\vspace{2mm}

{\bf Key words:} stochastic fractional Burgers equation, ergodic
properties, invariant measure, strong feller property, strongly
mixing property, cylindrical Wiener noise.

{\bf 2010 Mathematics subject classification:} 58J65, 60H15, 35R11.

\end{abstract}

\section*{1 Introduction}

\setcounter{equation}{0}\setcounter{section}{1}

Dynamics of many complex phenomena is driven by nonlocal
interactions. Such problems are usually modeled by means of
evolution equations including fractional powers of differential
operators or more general pseudo-differential operators. For
example, such equations arise in the theory of the quasi-geostrophic
flows,  the fast rotating fluids,  the dynamic of the
frontogenesis\footnote{The frontogenesis is the terminology used by
atmosphere scientists for describing the formation in finite time of
a discontinuous temperature front.}  in meteorology,  the diffusions
in fractal or disordered medium, the pollution problems, the
mathematical finance and the transport problems, see for a short
list e.g. \cite{Biler-Funaki-98, Caffarelli-2009,
Caffarelli-Vasseur2010, Constantin-chaae-wu-Arxiv10,
Kiselev-Nazarov-Fractal-Burgers-08, SugKak, Pablo-VazquezA-Arxiv,
Sug}  and the references therein. In \cite{SugKak, Sug} Kakutani and
Sugimoto studied the wave propagation in complex solids, especially,
viscoelastic materials (for example Polymers). They proved that the
viscoelasticity affects the behavior of the wave.  In particular,
they showed that the relaxation function has the form $ k(t)=
ct^{-\nu}, \; 0<\nu<1 ,\;\; c \in \mathbb{R}$, instead of the
exponential form known in the standard models (Maxwell-Voigt and
Voigt). This polynomial relaxation, called slow relaxation,  is due
to the non uniformity of the material. The main reason of this non
uniformity  is  the accumulation of several relaxations in different
scales. The far field is then described by a Burgers equation with
the leading operator $ (-\Delta)^\frac{1+\nu}{2}$ instead of the
Laplacian:
$$\partial_tu = -(-\Delta)^\frac{1+\nu}{2}u + \partial_xu^2 .$$
The above equation also describes the far-field evolution of
acoustic waves propagating in a gas-filled tube with a boundary
layer and has been used to study the acoustic waves in tunnels
during the passage of the trains\cite{Sug}. Indeed, in the last case
the geometrical configurations can yield a memory effect and other
types of resonance phenomena.

Frequently, lack of information about properties of the system makes
it natural to introduce stochastic models. Moreover, the stochastic
models are also powerful tools in the study of stability of
deterministic systems under small perturbations. The stochastic
character is visible when the initial data is  random and/or when
the coefficients are random.

Stochastic Partial Differential Equations play an essential role in
the mathematical modeling of many physical phenomena. These
equations are not only generalizations of the deterministic cases,
but they lead to new and important phenomena as well. E.g. Crauel
and Flandoli in \cite{Crauel-Flandoli98} showed that the
deterministic pitchfork bifurcation disappears as soon as an
additive white noise of arbitrarily small intensity is incorporated
the model. In \cite{Hairer-MattinglyErgodicity-06}, Hairer and
Mattingly characterized the class of noises for which the 2
dimensional stochastic Navier-Stokes equation is ergodic. In recent
series of papers and lectures, Flandoli and his Co-authors proved
that for several examples of deterministic partial differential
equations which are illposedness a suitable random noise can restore
the illposedness see e.g. \cite{Barbato-Flandoli- Morandin-2010,
DaPrato-Flandoli-2010, Flandoli-10, Flandoli-etal-transportEq-10}.

In particular, the stochastic Burgers equation (briefly SBE) emerges
in the modeling of many phenomena, such as in hydrodynamics, in
cosmology and in turbulence see for short list e.g.
\cite{Albv-Flando-Sinai-08, DapratoDebussche-Temam-94,
Neate-Truman08} and the references therein. While the physical
models leading to Burgers equation are simple, the mathematical
study is quite difficult and complex. The nonlinear term in this
equation comes from kinematical considerations, hence it cannot be
replaced by some simplifications or modifications. Let us also
denote that recently some other generalizations of stochastic
Burgers equation have also been investigated see e.g
\cite{Rockner-BS-2006, Truman-Wu06}.

The aim of this paper is to study the ergodic properties of the
solution of the fractional stochastic Burgers equation. The ergodic
properties of several stochastic partial differential equations have
been extensively studied, e.g.
\cite{DaPrato-Debussche-Differentiability-98,
Daprato-Elworthy-zabczyk-95, DaPrato-Gatarek-95, DaPrato-Zabczyk-96,
Gatarek-Goldys-Inv-Meas-97, Goldys-Maslowski-05,
Hairer-MattinglyErgodicity-06, Peszat-Zabc-strong-Feller-95,
Shirikyan-06}. These equations do not recover the FSBE.  More
precisely, we are interested in the fractional stochastic Burgers
equation (FSBE) given by:

\begin{equation}\label{Eq-Brut-FSBE}
\partial_tu= -(-\Delta)^\frac\alpha2 u + \partial_xu^2 +
g(u)\partial^2_{tx}W_{tx},\quad t>0,\; x\in (0, 1),
\end{equation}
with the boundary conditions,
\begin{equation}\label{Eq-Brut-FSBE-BC}
u(t, 0)= u(t, 1)=0,\quad t>0,
\end{equation}
and the initial condition
\begin{equation}\label{Eq-Brut-FSBE-IC}
u(0,x) = u_0(x),\quad x\in[0,1]
\end{equation}
where $u_0 \in L^2(0, 1)$. We will denote by $A$ the negative
Dirichlet Laplacian in the space $H=L^2(0,1)$, that is
\[A=-\Delta,\quad D(A)=H^{2,2}(0,1)\cap H^{1,2}_0(0,1).\]
Then the fractional power $(-\Delta)^{\frac{\alpha}{2}}$ is defined
as a fractional power of the operator $A$ (see \cite[pp.
72-73]{Pazy_1983}):
\begin{equation}\label{Eq-Fract-Pazy}
(-\Delta)^\frac\alpha2:=A_\alpha u:=\frac{\sin \frac{\alpha \pi}{2}
}{\pi}\int_0^\infty t^{\frac{\alpha}{2}-1}A(tI+A)^{-1}dt.
\end{equation}
\noindent The diffusion coefficient $ g $ is a bounded and Lipschitz
continuous map from $\mathbb{R}$ to $\mathbb{R}$ and
$\partial^2_{tx}W_{tx}$ stands for the space-time white noise i.e. $
\partial^2_{tx}W_{tx}$ is the distributional derivative of a mean zero
Gaussian field $\{ W(t, x), t\geq 0, x\in [0, 1]\}$ defined on a
filtered probability space $ (\Omega, \mathcal{F},\mathbb{F},
\mathbb{P}) $, where $\mathbb{F}=\{\mathcal{F}_t\}_{t\geq0}$,  such
that the covariance function is given by:
\begin{equation}\label{Eq-Cov-W}
\mathbb{E}[W(t, x)W(s, y)]= (t\wedge s)x\wedge y,\quad  t,s \geq
0,\,\, x, y \in [0, 1].
\end{equation}
It is  now customary,  see for instance \cite{DaPrato-Zbc-92} (and
\cite{Brz+Debbi_2007}),  to rewrite the problem
(\ref{Eq-Brut-FSBE}-\ref{Eq-Fract-Pazy})  as a stochastic evolution
problem in  the Hilbert space $ H= L^2(0, 1)$ in the following way
\begin{equation}\label{FSBE}
\Bigg\{
\begin{array}{lr}
du(t)= (-A_\alpha u(t) + Bu^2(t))dt + g(u(t))\,dW_t, \; t>0,\\
u(0) = u_0\in L^2(0, 1).
\end{array}
\end{equation}
Here   $B:=\frac{\partial}{\partial x}$ and  $ W:=\{ W(t), t\geq 0\}
$  is an $H$-cylindrical Wiener process  given by (with
$(e_j)_{j=1}^\infty $ is an ONB of $H$),
\begin{equation}\label{Seri-W}
 W(t):= \sum_{j=1}^\infty \beta_j(t)e_j,
\end{equation}
where $(\beta_j)_{j=1}^\infty $ is a  sequence of independent real
Brownian motions.

Let us recall the definition of a solution we will use throughout
the whole paper.

\begin{definF} Suppose that $ 1<
\alpha \leq 2$. An  $ \mathbb{F}$-adapted $L^2(0,1)$-valued
continuous process $u= (u(t), t\geq 0 )$ is said to be a mild
solution of equation (\ref{FSBE}) if for some
$p>\frac{2\alpha}{\alpha-1}$
\begin{equation}\label{cond-1}
\mathbb{E}\sup_{t \in [0, T]}|u(t)|_{L^2}^{p} < \infty, \quad
T>0,\end{equation} the function $(0,t)\ni s\to
S_\alpha(t-s)Bu^2(s)\in L^2(0,1)$ is Bochner integrable and for
every $t\geq 0$, the following identity holds $\mathbb P$-a.s. in
$L^2(0,1)$
\begin{equation}\label{Principal Eq in Burgers integ form}
u(t)= S_{\alpha}(t)u_{0}+ \int_{0}^{t}S_{\alpha}(t-s)Bu^{2}(s)\,ds +
\int_{0}^{t}S_{\alpha}(t-s)g(u(s))\,dW(s).
\end{equation}
\end{definF}

For the reader's convenience we recall the existence and uniqueness
result for the case $ u_0 \in L^2$. For the general case and proof
see \cite{Brz+Debbi_2007}.

\begin{theoremF}\label{th-Existence}
Assume that $g:\mathbb{R}\to \mathbb{R}$ is a bounded and Lipschitz
continuous function and that $\frac32< \alpha<2$. Then for every
$u_0 \in L^2(0,1)$  there exists a unique mild solution $u$ of
equation (\ref{FSBE}).
\end{theoremF}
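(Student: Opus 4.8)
The plan is to follow the now-standard fixed-point scheme for stochastic Burgers-type equations (as in \cite{DaPrato-Zbc-92}, \cite{DapratoDebussche-Temam-94}), adapting it to the fractional operator $A_\alpha=(-\Delta)^{\alpha/2}$. First I would record the smoothing properties of the analytic semigroup $S_\alpha(t)=e^{-tA_\alpha}$ generated by $-A_\alpha$ on $H=L^2(0,1)$: since the eigenvalues of $A$ on $(0,1)$ behave like $k^2$, those of $A_\alpha$ behave like $k^\alpha$, and one gets, for suitable exponents, estimates of the form $\|A_\alpha^{\theta/\alpha}S_\alpha(t)\|_{\mathcal L(H)}\lesssim t^{-\theta/\alpha}$ together with the factorization-type bounds for $\|A_\alpha^{\theta/\alpha}S_\alpha(t)B\|_{\mathcal L(L^q,L^p)}$ controlling the loss of one spatial derivative coming from $B=\partial_x$. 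The key point is that because $\alpha>1$ the semigroup is smoothing of order $\alpha>1$, so it can absorb the derivative in the nonlinear term at the price of an integrable singularity $t^{-\gamma}$ with $\gamma=\frac12+\frac1\alpha(\text{something})<1$; the constraint $\alpha>\frac32$ and the choice $p>\frac{2\alpha}{\alpha-1}$ are exactly what make the relevant Sobolev embeddings and time integrals converge.

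Next I would treat the stochastic convolution $z(t)=\int_0^t S_\alpha(t-s)g(u(s))\,dW(s)$. Using the factorization method, one writes $z$ in terms of $Y(s)=\int_0^s (s-r)^{-\delta}S_\alpha(s-r)g(u(r))\,dW(r)$ for a small $\delta>0$ and $S_\alpha$ applied to a fractional integral of $Y$; the $L^p(\Omega;C([0,T];L^2))$-bound for $z$ then follows from Burkholder's inequality, the Hilbert–Schmidt estimate $\|(t-s)^{-\delta}S_\alpha(t-s)g(u(s))\|_{\mathcal L_2(H)}^2\lesssim (t-s)^{-2\delta}\sum_k e^{-2(t-s)\lambda_k^{\alpha/2}}$ (finite and integrable in $s$ precisely when $\alpha>1$, again using $\lambda_k\sim k^2$), and the boundedness of $g$. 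Here one also gets, for free, that $z$ takes values in a space $C([0,T];L^r)$ with $r>2$ — this extra integrability is what one needs to make sense of $u^2$.

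Then I would set up the contraction. Write $u=z+v$ where $v$ solves the random PDE $v'=-A_\alpha v+B(v+z)^2$, $v(0)=u_0$, and define the map $\Phi$ on the ball of $X_T=C([0,T];L^2)\cap(\text{a space with }L^p\text{-in-space control})$ by $\Phi(v)(t)=S_\alpha(t)u_0+\int_0^t S_\alpha(t-s)B(v(s)+z(s))^2\,ds$. The estimates from the first paragraph give, for $v_1,v_2$ in a fixed ball, $\|\Phi(v_1)-\Phi(v_2)\|_{X_T}\le C\,T^{\eta}(1+R)\|v_1-v_2\|_{X_T}$ with $\eta>0$, so for $T$ small enough $\Phi$ is a contraction; uniqueness on $[0,T]$ is immediate, and a standard a priori bound (multiply the equation by $u$, use the antisymmetry $\langle Bu^2,u\rangle=\int \partial_x(u^2)\,u=-\frac23\int\partial_x(u^3)=0$ with the Dirichlet boundary conditions to kill the nonlinear term, and Gronwall plus the bound on $z$) shows the $L^2$-norm does not blow up, so the local solution extends to all of $[0,T]$ and condition (\ref{cond-1}) holds. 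Finally, Bochner integrability of $s\mapsto S_\alpha(t-s)Bu^2(s)$ follows from the same regularizing estimate together with $u\in L^{2p}$ in space.

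\medskip
\noindent\textbf{Main obstacle.} The delicate point is the nonlinear estimate: controlling $\int_0^t\|A_\alpha^{\kappa}S_\alpha(t-s)B\bigl((v+z)^2(s)\bigr)\|_{L^2}\,ds$ so that (i) the time singularity stays integrable and (ii) one recovers enough spatial regularity to close the loop in the same norm used for $v$, all while $z$ only has $L^r$-in-space regularity for some finite $r>2$. This forces a careful bookkeeping of Sobolev exponents: one needs $\|B(fg)\|_{(\text{negative Sobolev})}\lesssim \|f\|_{L^a}\|g\|_{L^b}$ with $\frac1a+\frac1b$ admissible, and then the gain of $\alpha>1$ derivatives from $S_\alpha(t-s)$ must beat both the derivative in $B$ and the embedding deficit, leaving a power $(t-s)^{-\gamma}$ with $\gamma<1$. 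This is precisely where the hypothesis $\frac32<\alpha<2$ is used, and getting the exponents to line up — rather than any conceptual difficulty — is the crux of the argument; the rest is the routine Banach fixed-point and Gronwall machinery. Since the full details are in \cite{Brz+Debbi_2007}, here one would only sketch these estimates.
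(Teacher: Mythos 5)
Your outline cannot be checked against an in-paper argument, because the paper does not prove Theorem \ref{th-Existence}: it is quoted from \cite{Brz+Debbi_2007} and the reader is referred there for the proof. What the paper does recall from that reference in Section \ref{sec-apriori} --- the decomposition $u=v_\gamma+z_\gamma$ with $z_\gamma$ the stochastic convolution, the $\gamma$-radonifying/Burkholder bound \eqref{Burkholder} for $z_\gamma$ in $H^{\sigma,q}$, the smoothing estimate of Lemma \ref{lem-A-beta_seconterm} for $(-A)^{\beta}e^{-tA_\alpha}B$ acting on $L^1$ data, and the energy inequality of Lemma \ref{lem-est-v-gamma} --- is exactly the machinery your sketch invokes, so your route coincides with the one the authors rely on.

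One step of your sketch is stated in a way that would not close as written: for the global extension you propose to ``multiply the equation by $u$'' and kill the nonlinear term by the antisymmetry $\langle Bu^2,u\rangle=0$. For a mild solution driven by space--time white noise one cannot apply the chain rule to $|u(t)|_{L^2}^2$ directly (this is precisely why the pathwise variable $v=u-z$ is introduced), and once you pass to $v$ the cancellation only removes $\langle Bv^2,v\rangle$; the cross terms $\langle B(vz),v\rangle$ and $\langle Bz^2,v\rangle$ survive and are the genuinely delicate part of the a priori bound. They are controlled by pairing the dissipation $|v|_{H_0^{\alpha/2,2}}^2$ against the bilinear estimate of Proposition \ref{Gprop-deriv_estimate}, the multiplication Theorem \ref{TeormRS-Multip} and interpolation, and this is where the restriction $\alpha>\tfrac32$ enters the energy estimate (not only in the fixed-point singularity count you single out as the main obstacle). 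With that correction --- i.e.\ replacing your Gronwall step by the estimate of Lemma \ref{lem-est-v-gamma} (equivalently \cite[Lemma 3.4]{Brz+Debbi_2007}) --- your plan matches the cited proof.
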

In what follows the initial data $u_0 \in L^2(0,1)$ will usually be
denoted by $x$ and the unique mild solution $u$ of equation
(\ref{FSBE}) whose existence is guaranteed by Theorem
\ref{th-Existence} will be denote by $u(t,x)$, $t \geq 0$.

\noindent Using standard arguments, see for example
\cite{DaPrato-Zbc-92}, we can prove that the solution of equation
(\ref{FSBE}) is an $L^2$-valued strong-Markov and Feller process.
Let us define, the transition semigroup corresponding to the FSBE
\eqref{FSBE}, i.e.  a family $ \mathbf{U}=\big(U_t\big)_{t\geq 0} $
of bounded linear operators on the space of bounded Borel functions
$ \mathcal{B}_b(L^2)$ by
\begin{equation}\label{eqn-U}
  (U_t\varphi)(x)= \mathbb{E}[\varphi(u(t,x)],\quad x\in L^2(0,1),\;\; t\geq 0.
\end{equation}
Note that $\mathbf{U}$ is a semigroup (although not strongly
continuous) on $ \mathcal{B}_b(L^2(0,1))$. Let
\[\mu_t(x,B)=\mathbb P(u(t,x)\in B),\quad t\ge 0,x\in L^2(0,1), B\subset L^2(0,1),\,\,\mathrm{Borel},\]
be the family of transition measures of the process $u$. Then,
clearly
\[U_t\phi(x)=\int_{L^2}\phi(y)\mu_t(x,dy),\quad \phi\in\mathcal B\left(L^2(0,1)\right).\]
 The main  aim of our article is  to study the ergodic properties of the
FSBE \eqref{FSBE}, or equivalently, the ergodic properties of the
semigroup $\mathbf{U}$.  Our main result is the following one.

\begin{theoremF}\label{Main-Theo}
(a) Assume that $g:\mathbb{R}\to \mathbb{R}$ is a bounded and
Lipschitz continuous function. Suppose that $ \frac32< \alpha<2$.
Then there exists an invariant measure $\mu$ for the transition
semigroup $ \mathbf{U}$ corresponding to the fractional
stochastic Burgers equation \eqref{FSBE}.\\
(b) Assume additionally that
 $$\inf_{x\in \mathbb{R}}\vert g(x)\vert >0.$$ Then the invariant
measure $\mu$ is unique and for every $x\in L^2(0,1)$
\begin{equation}\label{tv}
\lim_{t\to\infty}\left\|\mu_t(x,\cdot)-\mu(\cdot)\right\|_{TV}=0,
\end{equation}
where $\|\cdot\|_{TV}$ denotes the total variation norm.
\end{theoremF}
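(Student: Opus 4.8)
The plan is to derive part (a) from the Krylov--Bogoliubov averaging procedure and part (b) from the Doob--Khasminskii theorem, the latter relying on the strong Feller and irreducibility properties of the semigroup $\mathbf U$ that, as announced in the abstract, form the technical core of the paper.

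For part (a), fix $x\in L^2(0,1)$ and set $R_T(x,\Gamma):=\frac1T\int_0^T\mu_t(x,\Gamma)\,dt$ for Borel $\Gamma\subset L^2(0,1)$. Since $\mathbf U$ is Feller, every weak limit point of $\{R_{T_n}(x,\cdot)\}$ is an invariant measure, so it suffices to prove that the family $\{R_T(x,\cdot)\}_{T\ge1}$ is tight on $L^2(0,1)$. To this end I would first establish the uniform-in-time bound $\sup_{t\ge0}\mathbb E|u(t,x)|_{L^2}^2\le C(1+|x|_{L^2}^2)$. Write $u=z+v$, where $z(t)=\int_0^tS_\alpha(t-s)g(u(s))\,dW(s)$ is the stochastic convolution and $v=u-z$ solves, pathwise, the random equation $v'=-A_\alpha v+B(v+z)^2$. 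Boundedness of $g$ yields uniform moment estimates for $z$ in $H^{\beta,2}(0,1)$ for every $\beta<\frac{\alpha-1}{2}$, so in particular $z$ is a genuine $L^2$-valued process; testing the equation for $v$ with $v$ and using the cancellation $\langle Bw^2,w\rangle_{L^2}=0$ forced by the Dirichlet boundary conditions, together with the coercivity $\langle A_\alpha v,v\rangle_{L^2}\ge c\|v\|_{H^{\alpha/2,2}}^2$, gives a Gronwall-type inequality that closes the $L^2$ bound and, after integration in time, also produces
\[
\frac1T\int_0^T\mathbb E\,\|u(s,x)\|_{H^{\alpha/2,2}}^2\,ds\le C\Bigl(1+\tfrac1T|x|_{L^2}^2\Bigr).
\]
Since $H^{\alpha/2,2}(0,1)$ embeds compactly in $L^2(0,1)$, Chebyshev's inequality applied to $\int\|y\|_{H^{\alpha/2,2}}^2\,R_T(x,dy)$ gives tightness of $\{R_T(x,\cdot)\}_{T\ge1}$, and (a) follows.

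For part (b), the assumption $\inf_{x\in\mathbb R}|g(x)|>0$ makes the noise nondegenerate, and I would invoke the two facts proved earlier in the paper: the semigroup $\mathbf U$ is strong Feller and irreducible on $L^2(0,1)$. By the Doob--Khasminskii theorem (see e.g. Da Prato--Zabczyk), these two properties imply that, for $t$ large, the transition measures $\mu_t(x,\cdot)$, $x\in L^2(0,1)$, are pairwise mutually equivalent; combined with the invariant measure $\mu$ produced in part (a), this forces $\mu$ to be the unique invariant measure and each $\mu_t(x,\cdot)$ to be equivalent to $\mu$. Doob's theorem then yields the total variation convergence \eqref{tv} for every $x\in L^2(0,1)$.

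The main obstacle is the uniform-in-time a priori estimate in part (a), where one must cope simultaneously with the low spatial regularity imposed by the space-time white noise and with the genuinely quadratic, non-lower-order nonlinearity $Bu^2=\partial_x(u^2)$. The decomposition $u=z+v$ and the cancellation identity are the key devices, but the cross terms $\langle B(2vz+z^2),v\rangle_{L^2}$ must still be absorbed by the dissipation $c\|v\|_{H^{\alpha/2,2}}^2$; this is where the restriction $\alpha>\frac32$ enters, in conjunction with one-dimensional Sobolev embeddings, interpolation, and the integrability $p>\frac{2\alpha}{\alpha-1}$ from Definition 1. Part (b), by contrast, is essentially a citation of classical infinite-dimensional ergodic theory once the strong Feller and irreducibility lemmas are in hand.
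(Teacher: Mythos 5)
Your part (b) matches the paper: strong Feller (Theorem \ref{Theorem-SF}) plus irreducibility (Theorem \ref{Theorem-irreducibility}) give regularity of $\mathbf U$ via Khas'minskii, Doob gives uniqueness, and the total variation convergence \eqref{tv} is obtained from the results of Stettner and Maslowski--Seidler (note that Doob's theorem alone gives only setwise convergence of $\mu_t(x,\cdot)$ to $\mu$, not convergence in $\|\cdot\|_{TV}$, so the extra citation is not cosmetic). Part (a), however, contains a genuine gap. You claim that testing the equation for $v=u-z$ with $v$ and using the cancellation and coercivity ``gives a Gronwall-type inequality that closes the $L^2$ bound'' $\sup_{t\ge 0}\mathbb E|u(t,x)|_{L^2}^2\le C(1+|x|_{L^2}^2)$. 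It does not. After estimating the cross term $\langle B(zv),v\rangle$ by Proposition \ref{Gprop-deriv_estimate}, the multiplication theorem and interpolation, the best one gets (this is exactly Lemma \ref{lem-est-v-gamma}) is an inequality of the form $\frac{d}{dt}|v|_{L^2}^2+\nu|v|^2_{H^{\alpha/2,2}_0}\le C|z|^{\kappa}_{H^{1-\alpha/2,2}}\,|v|_{L^2}^2+(\text{terms in }z\text{ only})$ with $\kappa=\frac{2\alpha}{\alpha-s_0}$. The coefficient of $|v|_{L^2}^2$ is a random, time-dependent quantity that cannot be absorbed into the dissipation, so Gronwall produces $|v(t)|_{L^2}^2\le e^{C\int_0^t|z(s)|^\kappa ds}(\dots)$, and there is no control on $\mathbb E\,e^{C\int_0^t|z(s)|^\kappa ds}$ uniformly in $t$ (nor even its finiteness, in general). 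This is the classical obstruction for the stochastic Burgers equation; a uniform-in-time second moment bound of the kind you assert is not available by this route, and your subsequent time-averaged $H^{\alpha/2,2}$ bound and the Chebyshev/compactness step inherit the gap.

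The paper's proof circumvents this with two devices you are missing. First, it replaces $z$ by the $\gamma$-shifted convolution $z_\gamma(t)=\int_0^te^{-(t-s)(A_\alpha+\gamma)}g(u(s))\,dW(s)$ and shows (Lemma \ref{lem-A-sigmaZ-gamma-epsilon-s-q}) that all relevant moments $\mathbb E|z_\gamma(t)|^p_{H^{\sigma,q}}$, $\sigma<\frac{\alpha-1}{2}$, can be made uniformly \emph{small} by taking $\gamma$ large. Second, instead of a moment bound it proves only the time-averaged probability bound $\frac1t\int_0^t\mathbb P\bigl(|v_\gamma(s)|^2_{L^2}\ge M\bigr)\,ds<\eps$ (Lemma \ref{lem-bound-mean}) via the logarithmic transformation $\zeta(t)=\log\bigl(|v_\gamma(t)|^2_{L^2}\vee M\bigr)$: dividing the energy inequality by $|v_\gamma|^2_{L^2}$ converts the problematic multiplicative random coefficient into an additive one whose expectation is small. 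Tightness for the Krylov--Bogoliubov averages is then obtained by a smoothing step, bounding $\frac1T\int_0^T\mathbb P\bigl(|(-A)^\beta u(t+1)|^2_{L^2}>M\bigr)\,dt$ for $\beta<\frac{2\alpha-3}{4}$ (Lemma \ref{lem-A-beta_seconterm}), using the compact embedding of $H^{2\beta,2}(0,1)$ into $L^2(0,1)$; probability bounds, not moment bounds, suffice for Prokhorov. To repair your argument you would need to adopt both the $\gamma$-shift and the logarithmic (or an equivalent stopping-time) device, and replace the claimed uniform moment estimate by these averaged-in-time tail estimates.
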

We note that \eqref{tv} trivially yields the strong mixing property,
that is,
\begin{equation}\label{Strong-Mixing}
\lim_{t\to\infty}\int_{L^2}\left|U_t\phi-\int_{L^2}\phi
d\mu\right|^2d\mu=0,
\end{equation}
for every $\phi:L^2(0, 1)\to\mathbb R$ such that
\[\int_{L^2}|\phi|^2d\mu<\infty.\]
The paper is organized as follows. Section 2 contains some auxiliary
facts and basic estimates that will be needed later. In the third
and fourth sections, we will study the regularity of the semigroup
corresponding to the solution of the FSBE. In particular, we will
prove the strong Feller property and the irreducibility in sections
three and four respectively.  In the last section we will prove the
existence of an invariant measure and, under stronger assumptions,
its uniqueness and the strong mixing property \eqref{Strong-Mixing}.

\section{Preliminaries and a priori estimates}
\label{sec-apriori}

Let $u=(u(t), t\geq 0)$ be a given process and let $ \gamma \in
\mathbb{R}_+$. We introduce the following stochastic process
$z_\gamma= (z_\gamma(t), t\geq 0)$ defined by
\begin{equation}\label{Eq-z-gamma-t}
z_\gamma(t):= \int_0^t
e^{-(t-s)(A_\alpha+\gamma)}g(u(s))\,dW(s),\quad t\geq 0.
\end{equation}
The process $ z_\gamma$ satisfies the following stochastic
differential equation
\begin{equation}\label{Eq-z-gamma-t-eq-diff}
\left\{
\begin{array}{rl}
dz_\gamma(t)&= -(A_\alpha+\gamma ) z_\gamma(t)dt+ g(u(t))\,dW(t), \;\;t \geq 0,\nonumber \\
z_\gamma(0) &= 0.
\end{array}\right.
\end{equation}
Let us remark that if a process $ u$ is a solution of \eqref{FSBE}
then the stochastic process $ v_\gamma:= u-z_\gamma$ satisfies
pathwise the following equation
\begin{equation}\label{Eq.Determ.v(t)}
\left\{
\begin{array}{rl}
\frac{d}{dt}v_\gamma(t)&= -A_{\alpha}v_\gamma(t) + B(v_\gamma(t)+z_\gamma(t))^{2}+ \gamma z_\gamma(t), \;\; t> 0,\\
v(0) &= u_0.
\end{array}\right.
\end{equation}

The following two results are the main tools allowing us to study
the long time behaviour of the norm $ |v_\gamma(t)|^2_{L^2}$. For $s
\in \mathbb{R}^+\setminus \mathbb{N}$ we will denote by $ H^{s, p} $
the fractional order Sobolev space (called also Bessel potential
spaces and sometimes Lebesgue Besov spaces),  defined by the complex
interpolation method, i.e.
\begin{equation}\label{eqn-sobolev_spaces}
H^{s, p}(0,1)=[H^{k, p}(0,1), H^{m,p}(0,1)]_{\vartheta},
\end{equation}
 where
$k, m \in \mathbb{N}, \vartheta \in (0,1)$, $k<m$, are chosen to
satisfy
\begin{equation}\label{eqn-2.1c}
 s=(1-\vartheta)k+\vartheta m. \end{equation}
These spaces coincide with the Sobolev spaces $ W^{s, p}(0,1)$ for
integer values of $s$ if $ 1<p<\infty $ and for all $s$ when $p=2$
see e.g. \cite[p 219]{Adams-Sbolev-spaces75}, \cite{R&S-96} and
\cite[p 310]{Triebel-interpolation-theo-78}. Moreover, under
condition \eqref{eqn-2.1c}, we have, see e.g. \cite[p 103, p 196 \&
p 336]{Triebel-interpolation-theo-78}
\begin{equation}\label{eqn-sobolev_spaces-domain-of- operator}
H^{s, p}(0,1)= D((-\Delta)^{\frac s2})= [L^{p}(0,1), H^{m,
p}(0,1)]_{\vartheta}.
\end{equation}
In what follows  by $H_0^{s,p}(0,1)$, $s\geq 0$, $p\in(1,\infty)$,
we will denote the closure of $C_0^\infty(0,1)$ in the Banach space
$H^{s,p}(0,1)$. It is well known, see e.g. \cite[Theorem
11.1]{LM-72-i} and \cite[Theorem 1.4.3.2,
p.317]{Triebel-interpolation-theo-78} that
$H_0^{s,p}(0,1)=H^{s,p}(0,1)$ iff $s\leq \frac1{p}$. The norms in
various Sobolev spaces $H^{s,p}(0,1)$ will be denoted by $\vert
\cdot\vert_{H^{s, p}}$. Similarly, the norm in the $L^p(0,1)$ space
will be denoted by $\vert \cdot\vert_{L^p}$.

\begin{lemmaF}\label{lem-3-3brzezniak-debbi}\cite[Lemma 3.4]{Brz+Debbi_2007}
There exist $ \nu_1>0$, $ q>2$,  $ s \in (\frac1q, \frac12)$ and $
C>0$, such that
\begin{eqnarray}\label{ineq-energy-2}
\nonumber \frac{d}{dt}|v_\gamma(t)|_{L^2}^{2} &\leq&
-\frac{\nu_1}{2} |v_\gamma(t)|^2_{H^{\frac\alpha2,2}_0}+C
|z_\gamma(t)|^{\frac{\alpha}{\alpha-1}}_{H^{s,
q}}|v_\gamma(t)|^2_{L^2}\\&+&
C|z_\gamma(t)|^4_{H^{s,q}}+C|z_\gamma(t)|^4_{H^{1-\frac\alpha2,2}}+\gamma^2
C|z_\gamma(t)|^2_{L^2}, \;t \geq 0.
\end{eqnarray}
\end{lemmaF}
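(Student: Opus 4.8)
The plan is to derive the differential inequality \eqref{ineq-energy-2} by testing the deterministic equation \eqref{Eq.Determ.v(t)} with $v_\gamma(t)$ in $L^2(0,1)$ and carefully estimating the resulting nonlinear term. First I would multiply \eqref{Eq.Determ.v(t)} by $v_\gamma$ and integrate over $(0,1)$, obtaining
\[
\frac12\frac{d}{dt}|v_\gamma(t)|_{L^2}^2 = -\langle A_\alpha v_\gamma, v_\gamma\rangle + \langle B(v_\gamma+z_\gamma)^2, v_\gamma\rangle + \gamma\langle z_\gamma, v_\gamma\rangle.
\]
The first term equals $-|v_\gamma|^2_{H^{\alpha/2,2}_0}$ (up to equivalence of norms on $D(A_\alpha^{1/2}) = H^{\alpha/2,2}_0(0,1)$, since $\alpha/2 \le 1/2$ is \emph{not} the case here, but $1 < \alpha < 2$ so $\alpha/2 \in (1/2,1)$ — one must still identify the form domain, which is done in \cite{Brz+Debbi_2007}), yielding the leading negative coercive term. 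The last term is handled by Cauchy–Schwarz and Young's inequality: $\gamma|\langle z_\gamma,v_\gamma\rangle| \le \frac{\nu_1}{8}|v_\gamma|^2_{H^{\alpha/2,2}_0} + C\gamma^2|z_\gamma|^2_{L^2}$, using the continuous embedding $H^{\alpha/2,2}_0 \hookrightarrow L^2$ (or, more conservatively, just bounding $|v_\gamma|_{L^2}^2$ and absorbing later).

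The heart of the matter is the trilinear term $\langle B(v_\gamma+z_\gamma)^2, v_\gamma\rangle = \langle \partial_x(v_\gamma+z_\gamma)^2, v_\gamma\rangle$. Expanding $(v_\gamma+z_\gamma)^2 = v_\gamma^2 + 2v_\gamma z_\gamma + z_\gamma^2$, the key cancellation is that $\langle \partial_x(v_\gamma^2), v_\gamma\rangle = \int_0^1 \partial_x(v_\gamma^2)\,v_\gamma\,dx = -\frac12\int_0^1 v_\gamma^2\,\partial_x v_\gamma\,dx \cdot 2 = 0$ after integration by parts using the Dirichlet boundary conditions (the boundary terms vanish), exactly as in the classical Burgers energy estimate. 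What remains are the cross terms $2\langle\partial_x(v_\gamma z_\gamma),v_\gamma\rangle$ and $\langle\partial_x(z_\gamma^2),v_\gamma\rangle$. For these I would integrate by parts to move the derivative onto $v_\gamma$ where convenient, then estimate using H\"older's inequality together with the Sobolev embeddings $H^{s,q}(0,1)\hookrightarrow L^\infty(0,1)$ (valid since $s > 1/q$) and the Gagliardo–Nirenberg-type interpolation bounding $|v_\gamma|_{L^r}$ for suitable $r$ by a product of $|v_\gamma|_{H^{\alpha/2,2}_0}^\theta$ and $|v_\gamma|_{L^2}^{1-\theta}$. The exponent $\frac{\alpha}{\alpha-1}$ appearing on $|z_\gamma|_{H^{s,q}}$ is precisely what comes out of Young's inequality after choosing the interpolation parameter $\theta$ so that the power of $|v_\gamma|_{H^{\alpha/2,2}_0}$ is strictly less than $2$ (so it can be absorbed into the coercive term with a small constant $\frac{\nu_1}{8}$), and the complementary power of $|z_\gamma|_{H^{s,q}}$ and $|z_\gamma|_{H^{1-\alpha/2,2}}$ gives the terms $|z_\gamma|^{\alpha/(\alpha-1)}_{H^{s,q}}|v_\gamma|^2_{L^2}$, $|z_\gamma|^4_{H^{s,q}}$, and $|z_\gamma|^4_{H^{1-\alpha/2,2}}$.

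The main obstacle is the delicate bookkeeping of Sobolev exponents: one must choose $q>2$ and $s\in(1/q,1/2)$ so that simultaneously (i) $H^{s,q}_0 = H^{s,q}$ (which needs $s\le 1/q$ — note the strict inequality $s>1/q$ in the statement is for the $L^\infty$ embedding, so this is a separate regularity statement about $z_\gamma$ living in the right space, established in \cite{Brz+Debbi_2007}), (ii) the embedding $H^{s,q}\hookrightarrow L^\infty$ holds, (iii) the space-time regularity of the stochastic convolution $z_\gamma$ in $H^{s,q}$ and $H^{1-\alpha/2,2}$ is finite (requiring $\alpha>1$, and with room to spare when $\alpha>3/2$), and (iv) the interpolation inequality for $v_\gamma$ produces a power of the $H^{\alpha/2,2}_0$-norm strictly below $2$ after applying Young. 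Since all of this is quoted from \cite[Lemma 3.4]{Brz+Debbi_2007}, the proof here reduces to citing that reference; were one to reproduce it, the parameter juggling in step (iv) — ensuring the absorbable power is $<2$ while keeping the dual exponent on $z_\gamma$ manageable — is where all the work lies. I therefore expect the proof in the paper to simply invoke \cite{Brz+Debbi_2007}.
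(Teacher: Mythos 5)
Your proposal is correct and follows essentially the same route as the paper: the paper's proof of this lemma is just the citation to \cite[Proposition 3.3]{Brz+Debbi_2007}, exactly as you anticipated, and the detailed energy argument you sketch (testing \eqref{Eq.Determ.v(t)} with $v_\gamma$, the cancellation $\langle B(v_\gamma^2),v_\gamma\rangle=0$, Proposition \ref{Gprop-deriv_estimate} plus Sobolev multiplication and interpolation, then Young's inequality producing the exponent $\frac{\alpha}{\alpha-1}$) is precisely the strategy the paper carries out in full for the refined version, Lemma \ref{lem-est-v-gamma}. No gaps worth flagging.
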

\begin{proof}
The proof is similar to the proof of  \cite[Proposition
3.3]{Brz+Debbi_2007}.
\end{proof}

\begin{lemmaF}\label{lem-est-v-gamma}
Let $(v_\gamma(t))_{t\geq0}$ be the solution of the equation
\eqref{Eq.Determ.v(t)}. Then there exist $ \nu\geq 0, \;\; C>0$, $
s\in (\frac1q, \frac12)$, $ q\in (2,\infty)$ and $s_0\in (\frac12,
\frac\alpha2]$, such that
\begin{eqnarray}\label{Eq.est.diffv(t)}
\nonumber \frac{d}{dt}|v_\gamma(t)|_{L^2}^{2} &+&
\nu|v_\gamma(t)|^2_{H^{\frac\alpha2,2}_0} \leq
C|z_\gamma(t)|_{H^{1-\frac\alpha2,
2}}^{\frac{2\alpha}{\alpha-s_0}}\big|v_\gamma(t) \big|_{L^{2}}^{2}
+C|z_\gamma(t)|^4_{H^{s,q}}\nonumber\\
&+&C|z_\gamma(t)|^4_{H^{1-\frac\alpha2,2}}+C\gamma^2\big|z_\gamma
(t)\big|_{L^2}^{2} , \;t \geq 0.
\end{eqnarray}
\end{lemmaF}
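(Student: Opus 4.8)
The plan is to derive the differential inequality in Lemma \ref{lem-est-v-gamma} directly from Lemma \ref{lem-3-3brzezniak-debbi}, the only real task being to reorganize the various $z_\gamma$-dependent terms so that the exponent on $|v_\gamma(t)|_{L^2}^2$ is expressed through a single interpolation-type norm $|z_\gamma(t)|_{H^{1-\alpha/2,2}}$. First I would start from \eqref{ineq-energy-2}, whose left-hand side already contains $\frac{d}{dt}|v_\gamma(t)|_{L^2}^2+\frac{\nu_1}{2}|v_\gamma(t)|^2_{H^{\alpha/2,2}_0}$, so setting $\nu:=\nu_1/2$ takes care of the gradient term. The terms $C|z_\gamma(t)|^4_{H^{s,q}}$, $C|z_\gamma(t)|^4_{H^{1-\alpha/2,2}}$ and $C\gamma^2|z_\gamma(t)|^2_{L^2}$ are already in the desired shape, so they can be carried over verbatim. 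The only term that needs attention is the coefficient $C|z_\gamma(t)|^{\alpha/(\alpha-1)}_{H^{s,q}}$ in front of $|v_\gamma(t)|^2_{L^2}$, which must be dominated by $C|z_\gamma(t)|^{2\alpha/(\alpha-s_0)}_{H^{1-\alpha/2,2}}$ (up to adding further lower-order $z_\gamma$-terms already present on the right-hand side).

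The key step is therefore a Sobolev embedding / interpolation estimate bounding $|z_\gamma|_{H^{s,q}}$ by $|z_\gamma|_{H^{1-\alpha/2,2}}$ on the interval $(0,1)$. Choosing $s_0\in(\frac12,\frac\alpha2]$, one has the continuous embedding $H^{s_0,2}(0,1)\hookrightarrow H^{s,q}(0,1)$ whenever $s_0-\frac12\geq s-\frac1q$, i.e. for $s$ close enough to $\frac1q$ and $q$ close enough to $2$; since Lemma \ref{lem-3-3brzezniak-debbi} only asserts the existence of \emph{some} admissible $s,q$, I am free to shrink $s$ toward $\frac1q$ and $q$ toward $2$ so that this embedding holds. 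Then $|z_\gamma|_{H^{s,q}}\leq C|z_\gamma|_{H^{s_0,2}}$, and by complex interpolation between $H^{1-\alpha/2,2}$ and $H^{\alpha/2,2}_0$ (noting $1-\frac\alpha2\le s_0\le\frac\alpha2$ for $\frac32<\alpha<2$) one gets
\[
|z_\gamma|_{H^{s_0,2}}\leq C|z_\gamma|_{H^{1-\alpha/2,2}}^{\theta}|z_\gamma|_{H^{\alpha/2,2}}^{1-\theta},
\qquad s_0=(1-\theta)\Big(1-\tfrac\alpha2\Big)+\theta\tfrac\alpha2 .
\]
Raising to the power $\alpha/(\alpha-1)$ and applying Young's inequality to split off the $H^{\alpha/2,2}$-factor, one produces a term $C|z_\gamma|_{H^{1-\alpha/2,2}}^{r}$ with exponent $r$ that, after checking the arithmetic, can be arranged to equal $\frac{2\alpha}{\alpha-s_0}$ — this is the bookkeeping that fixes the precise form of the claimed exponent — plus harmless powers of $|z_\gamma|_{H^{\alpha/2,2}}$ which are themselves controlled by the already-present quartic $z_\gamma$-terms (again via Young's inequality), since pathwise $z_\gamma$ has all these norms finite.

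The main obstacle I anticipate is purely a matter of exponent accounting: verifying that the interpolation parameter $\theta$ corresponding to $s_0$, combined with the power $\alpha/(\alpha-1)$ coming from the nonlinearity in Lemma \ref{lem-3-3brzezniak-debbi}, really does yield the exponent $\frac{2\alpha}{\alpha-s_0}$ after Young splitting, and that all the residual $z_\gamma$-terms generated in the process are of degree $\leq 4$ in a norm no stronger than $H^{s,q}$ or $H^{1-\alpha/2,2}$, so that they are absorbed into the terms already displayed in \eqref{Eq.est.diffv(t)}. There is also a minor point to check, namely that the range $\frac32<\alpha<2$ guarantees $1-\frac\alpha2<\frac12<\frac\alpha2$, so that $s_0\in(\frac12,\frac\alpha2]$ is a genuine interpolation point strictly between the two endpoints, which is what makes $\theta\in(0,1)$ and hence Young's inequality applicable. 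Once these exponents are matched the inequality \eqref{Eq.est.diffv(t)} follows, and one may take the constant $C$ large enough and $\nu=\nu_1/2\geq0$.
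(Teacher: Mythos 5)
There is a genuine gap, and it lies exactly at the step you flag as ``bookkeeping''. Your plan is to upgrade the coefficient $C|z_\gamma|_{H^{s,q}}^{\alpha/(\alpha-1)}$ from Lemma \ref{lem-3-3brzezniak-debbi} by writing $|z_\gamma|_{H^{s,q}}\le C|z_\gamma|_{H^{s_0,2}}$ and then interpolating $z_\gamma$ between $H^{1-\frac\alpha2,2}$ and $H^{\frac\alpha2,2}$, discarding the $H^{\frac\alpha2,2}$-factor via Young's inequality into ``harmless'' terms ``controlled by the already-present quartic $z_\gamma$-terms''. This cannot work. First, $H^{\frac\alpha2,2}$ is strictly stronger than every $z_\gamma$-norm appearing on the right-hand side of \eqref{Eq.est.diffv(t)} (namely $H^{1-\frac\alpha2,2}$, $H^{s,q}$ with $s<\frac12$, and $L^2$), so no Young splitting can absorb a power of $|z_\gamma|_{H^{\frac\alpha2,2}}$ into those terms. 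Second, and decisively, the stochastic convolution $z_\gamma$ driven by space-time white noise has spatial regularity only up to $H^{\sigma,q}$ with $\sigma<\frac{\alpha-1}{2}<\frac12$ (this is precisely the constraint in Lemma \ref{lem-A-sigmaZ-gamma-epsilon-s-q}), whereas $\frac\alpha2>\frac34$; so $|z_\gamma(t)|_{H^{\frac\alpha2,2}}=\infty$ a.s.\ and your claim that ``pathwise $z_\gamma$ has all these norms finite'' is false. Note also that a direct embedding $H^{1-\frac\alpha2,2}\hookrightarrow H^{s,q}$ is impossible since $1-\frac\alpha2-\frac12<0<s-\frac1q$, so some strong-norm factor is unavoidable in your scheme --- and there is nowhere to put it.

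The paper's proof avoids this by not treating the lemma as a corollary of Lemma \ref{lem-3-3brzezniak-debbi}: it redoes the energy identity \eqref{ineq-energy} and handles the cross term $\langle B(z_\gamma v_\gamma),v_\gamma\rangle$ with Proposition \ref{Gprop-deriv_estimate} and the multiplication Theorem \ref{TeormRS-Multip} so that $z_\gamma$ carries only the $H^{1-\frac\alpha2,2}$-norm, while the interpolation $|v_\gamma|_{H^{s_0,2}}\le C|v_\gamma|_{L^2}^{1-2s_0/\alpha}|v_\gamma|_{H_0^{\frac\alpha2,2}}^{2s_0/\alpha}$ is applied to $v_\gamma$, which genuinely lies in $H_0^{\frac\alpha2,2}$. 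The strong-norm factor produced by Young's inequality is then $\nu_2|v_\gamma|^2_{H_0^{\frac\alpha2,2}}$, which is absorbed into the dissipative term on the \emph{left}-hand side --- the one mechanism available for disposing of a norm stronger than those allowed on the right. This is the idea missing from your argument: the interpolation/Young step must be performed on $v_\gamma$, not on $z_\gamma$, and that forces you back to the energy estimate rather than a post-processing of \eqref{ineq-energy-2}.
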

\noindent Lemma \ref{lem-est-v-gamma} is an improved version of
Lemma \ref{lem-3-3brzezniak-debbi}. The proofs are only slightly
different from those in \cite{Brz+Debbi_2007}. They are based on the
Sobolev embedding theorems \cite[Theorem 1, Tr 6, 3.3.1 section
2.4.4, p 82 \& Proposition Tr 6, 2.3.5 section 2.1.2, p 14]{R&S-96}
as well as the following one which we recall for the readers
convenience.
\begin{theoremF}\label{TeormRS-Multip} \cite[Theorem 2, section
4.4.4, p.177 \& Proposition Tr 6, 2.3.5 section 2.1.2, p.14]{R&S-96}
Assume that $0<s_1\leq s_2$,
$\frac1{p}\leq \frac1{p_1}+\frac1{p_2}
$,
$s_1+s_2>\frac{n}{p_1}+\frac{n}{p_2}-n
$ and  if $s_1<s_2$, $ p\geq p_1 $. Assume also that
\begin{equation}\label{Eq-con-RS-16}
\frac{n}{p}-s_1=
\begin{cases} (\frac{n}{p_1}-s_1)_+ + (\frac{n}{p_2}-s_2)_+, &
\text{ if } \; \max_{i}(\frac{n}{p_i}-s_i)>0 , \cr
\max_{i}(\frac{n}{p_i}-s_i)>0, & \text{ otherwise}
\end{cases}
\end{equation}
Then, provided that $\{i\in\{1,2\}: s_i=\frac{n}{p_i}\, \text{ and }
p_i>1 \}=\emptyset $,

\[H^{s_1, p_1}(0, 1)\cdot H^{s_2, p_2}(0, 1) \hookrightarrow H^{s_1, p}(0, 1).\]
\end{theoremF}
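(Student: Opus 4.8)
To prove Theorem~\ref{TeormRS-Multip}, the plan is to reduce the assertion on the interval to the analogous pointwise multiplication estimate on $\mathbb{R}^n$ (with $n=1$ in the application, though the argument is dimension-free) and then to prove the latter by a Littlewood--Paley / Bony paraproduct decomposition. For the reduction I would fix a \emph{universal} bounded linear extension operator $\mathcal{E}\colon H^{s,p}(0,1)\to H^{s,p}(\mathbb{R})$, i.e.\ one bounded simultaneously for all $s\ge0$ and all $p\in(1,\infty)$ (Stein's or Rychkov's construction), and note that for $f\in H^{s_1,p_1}(0,1)$, $g\in H^{s_2,p_2}(0,1)$ the product $fg$ is the restriction to $(0,1)$ of $(\mathcal Ef)(\mathcal Eg)$, so $|fg|_{H^{s_1,p}(0,1)}\lesssim|(\mathcal Ef)(\mathcal Eg)|_{H^{s_1,p}(\mathbb R)}$ (boundedness of the restriction on the integer-order scale and interpolation via \eqref{eqn-sobolev_spaces}). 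It therefore suffices to prove $H^{s_1,p_1}(\mathbb R^n)\cdot H^{s_2,p_2}(\mathbb R^n)\hookrightarrow H^{s_1,p}(\mathbb R^n)$ under the stated hypotheses, and here one uses that for $1<p<\infty$ the Bessel-potential space coincides with the Triebel--Lizorkin space $F^s_{p,2}$, with square-function norm $|f|_{H^{s,p}}\sim\big|\big(\sum_{j\ge0}2^{2js}|\Delta_jf|^2\big)^{1/2}\big|_{L^p}$, the $\Delta_j$ being the Littlewood--Paley blocks. (One might instead try to interpolate the elementary integer-order estimates, but complex interpolation is too lossy to reach the borderline cases, so paraproducts seem unavoidable.)

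Next I would write $fg=T_fg+T_gf+R(f,g)$, the Bony decomposition, with paraproducts $T_fg=\sum_kS_{k-2}f\,\Delta_kg$ ($S_k=\sum_{i<k}\Delta_i$) and resonant term $R(f,g)=\sum_{|j-k|\le1}\Delta_jf\,\Delta_kg$, and estimate each piece in $F^{s_1}_{p,2}(\mathbb R^n)$. The paraproduct $T_gf$, where the factor $f$ of smoothness $s_1$ sits at the high frequency, is the robust one: $S_{k-2}g\,\Delta_kf$ has Fourier support in an annulus of size $2^k$, so one keeps the weight $2^{ks_1}$ on $\Delta_kf$, bounds $\sup_k|S_{k-2}g|$ by the Hardy--Littlewood maximal function $\mathcal Mg$, and applies Hölder in $L^p$ together with the Fefferman--Stein vector-valued maximal inequality, ending at $|T_gf|_{F^{s_1}_{p,2}}\lesssim|g|_{L^{q_2}}|f|_{H^{s_1,p_1}}$ for a suitable $q_2$ with $\tfrac1p=\tfrac1{p_1}+\tfrac1{q_2}$, where $|g|_{L^{q_2}}\lesssim|g|_{H^{s_2,p_2}}$ by Sobolev embedding. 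The paraproduct $T_fg$ is symmetric except that one now gives up $s_1$ derivatives off the low-frequency factor $S_{k-2}f$; since $s_2\ge s_1>0$ the target smoothness $s_1$ is still reached, using the embedding of $H^{s_1,p_1}$ into $L^\infty$ or an $L^r$ substitute. The resonant term $R(f,g)$ is the delicate one: since $\Delta_jf\,\Delta_kg$ with $|j-k|\le1$ is only spectrally supported in a \emph{ball} $\{|\xi|\lesssim2^j\}$, summation recovers only smoothness $s_1+s_2-n\big(\tfrac1{p_1}+\tfrac1{p_2}-\tfrac1p\big)_+$, and matching this against the target $s_1$ forces one to the endpoint. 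All the hypotheses enter exactly here: the sub-additivity $\tfrac1p\le\tfrac1{p_1}+\tfrac1{p_2}$ and the relation \eqref{Eq-con-RS-16} tune the Hölder exponents and the critical Sobolev embeddings (into $L^\infty$, or into $\mathrm{bmo}$ at the borderline), the lower bound $s_1+s_2>\tfrac n{p_1}+\tfrac n{p_2}-n$ keeps the dyadic series for the resonant term summable, and the extra hypothesis $p\ge p_1$ when $s_1<s_2$ rescues that series at its endpoint.

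The main obstacle, as always for sharp multiplication theorems in Bessel-potential spaces, is the borderline regularity $\tfrac n{p_i}-s_i=0$: there the naive embedding $H^{s_i,p_i}\hookrightarrow L^\infty$ fails, and one must instead invoke $H^{n/p_i,p_i}(\mathbb R^n)\hookrightarrow\mathrm{bmo}$ for $p_i>1$ and rerun the paraproduct estimates with $\mathrm{bmo}$ replacing $L^\infty$ (Fefferman--Stein $\#$-function in place of $\mathcal M$) — which is precisely why the degenerate set $\{i\in\{1,2\}:s_i=\tfrac n{p_i},\ p_i>1\}$ must be empty. A secondary, bookkeeping-type difficulty is checking that the three bounds assemble to $F^{s_1}_{p,2}$ and not merely to $B^{s_1}_{p,\infty}$ or $F^{s_1}_{p,r}$ with $r>2$, which again rests on applying the Fefferman--Stein inequality in $L^p(\ell^2)$ at each Hölder step. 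For a fully rigorous treatment one may of course simply invoke the general multiplier theorem of Runst and Sickel \cite[Sect.~4.4.4]{R&S-96}, of which the displayed statement is the special case $q_1=q_2=q=2$ of $F^{s_1}_{p_1,q_1}\cdot F^{s_2}_{p_2,q_2}\hookrightarrow F^{s_1}_{p,q}$.
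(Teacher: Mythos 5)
The paper offers no proof of this statement at all: Theorem \ref{TeormRS-Multip} is imported verbatim from Runst and Sickel \cite[Theorem 2, Section 4.4.4]{R&S-96} and is used as a black box, so there is no internal argument to compare yours against. Your outline is nevertheless the ``right'' proof in the sense that it is essentially the strategy of the cited source: Runst--Sickel establish the general embedding $F^{s_1}_{p_1,q_1}\cdot F^{s_2}_{p_2,q_2}\hookrightarrow F^{s_1}_{p,q}$ by exactly the Bony paraproduct decomposition you describe, with the Fefferman--Stein vector-valued maximal inequality driving the two paraproduct terms, the resonant term accounting for the condition $s_1+s_2>\frac{n}{p_1}+\frac{n}{p_2}-n$ and for \eqref{Eq-con-RS-16}, and the exclusion of the set $\{i: s_i=\frac{n}{p_i},\ p_i>1\}$ arising precisely from the failure of $H^{n/p_i,p_i}\hookrightarrow L^\infty$ at the borderline; the statement in the paper is the special case $q_1=q_2=q=2$ restricted to an interval via a universal extension operator, as you note. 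Your correct identification of where each hypothesis enters is the substantive content here. The one caveat is that what you have written is a plan rather than a proof: the estimates for $T_fg$ and $R(f,g)$ are only gestured at (``for a suitable $q_2$'', ``rescues the series at its endpoint''), and carrying them out at the endpoints \eqref{Eq-con-RS-16} is genuinely laborious --- which is presumably why the authors, like you in your last sentence, simply cite \cite{R&S-96} rather than reproduce the argument.
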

\begin{propositionK}\label{Gprop-deriv_estimate}\cite[Proposition 3.3 ]{Brz+Debbi_2007}
Assume that $\beta\in (\frac12,1)$. Then there exists a constant
$C>0$ such that for each $u\in H_0^{\beta, 2}(0,1)$ and each $v\in
H^{1-\beta, 2}(0,1)$ the following inequality is satisfied
\begin{equation}\label{ineq-02}
\left|\int_0^1 u(x)Dv(x)\, dx \right| \leq  C|u|_{H_0^{\beta,2}}
|v|_{H^{1-\beta, 2}}.
\end{equation}
\end{propositionK}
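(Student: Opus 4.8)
The plan is to prove Proposition~\ref{Gprop-deriv_estimate} by duality and interpolation, exploiting the fact that $D=B$ is a first-order operator that, composed with integration against a test function, gains essentially one derivative. First I would observe that the bilinear form $(u,v)\mapsto \int_0^1 u(x)Dv(x)\,dx$ can be rewritten, via integration by parts and the boundary condition encoded in $u\in H_0^{\beta,2}$ when $\beta>\tfrac12$ (so that the trace of $u$ vanishes), as $-\int_0^1 Du(x)\,v(x)\,dx$; either form will do, and I will work with whichever is more convenient for the embeddings. The estimate we want is precisely the statement that $D$ maps $H_0^{\beta,2}(0,1)$ into the dual of $H^{1-\beta,2}(0,1)$, which (since $1-\beta<\tfrac12$, so $H^{1-\beta,2}=H_0^{1-\beta,2}$ by the trace characterization recalled in the text) has dual $H^{-(1-\beta),2}(0,1)=H^{\beta-1,2}(0,1)$.

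The key step is then the mapping property $D\colon H^{\beta,2}(0,1)\to H^{\beta-1,2}(0,1)$, which is immediate from the definition of Bessel potential spaces on the line (or on a bounded interval by the interpolation characterization \eqref{eqn-sobolev_spaces}): differentiation lowers the Sobolev exponent by one, uniformly. Concretely, I would check it first at the integer endpoints, $D\colon H^{1,2}\to L^2$ and $D\colon H^{2,2}\to H^{1,2}$, and then interpolate by the complex method, using \eqref{eqn-sobolev_spaces} with an appropriate choice of $k,m,\vartheta$; taking the $H_0$-versions where needed is justified by the same trace results cited before the statement. Pairing the output $Du\in H^{\beta-1,2}$ against $v\in H^{1-\beta,2}=(H^{\beta-1,2})'$ then yields exactly
\[
\left|\int_0^1 u\,Dv\,dx\right| = \left|\langle Du,v\rangle\right| \le \|Du\|_{H^{\beta-1,2}}\,\|v\|_{H^{1-\beta,2}} \le C\,|u|_{H_0^{\beta,2}}\,|v|_{H^{1-\beta,2}},
\]
with $C$ depending only on $\beta$.

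I anticipate the main obstacle to be purely technical: carefully justifying the duality $(H^{1-\beta,2}(0,1))'=H^{\beta-1,2}(0,1)$ on the bounded interval, together with the identification $H^{1-\beta,2}=H_0^{1-\beta,2}$ for $1-\beta<\tfrac12$, and making sure the interpolation of $D$ respects the zero boundary condition at the left endpoint of the scale. None of this is deep, but it requires invoking the right theorems from \cite{Triebel-interpolation-theo-78} and \cite{LM-72-i} (already cited in the excerpt) in the correct form. An alternative, more hands-on route that avoids abstract duality is to expand $u$ and $v$ in the Dirichlet eigenbasis $(e_j)$ of $A$, compute $\int_0^1 e_j\,De_k\,dx$ explicitly, and estimate the resulting double sum using that $|e_j|_{H_0^{\beta,2}}^2\sim\lambda_j^{\beta}$ with $\lambda_j\sim j^2$; the $\beta+(1-\beta)=1$ balance of exponents makes the bilinear sum bounded by Cauchy--Schwarz. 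I would keep the Fourier-series computation as a fallback in case the interpolation-duality argument needs more boundary-condition bookkeeping than is comfortable.
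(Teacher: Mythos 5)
The paper does not actually prove this proposition: it is quoted verbatim, with proof, from \cite[Proposition 3.3]{Brz+Debbi_2007}, so there is no in-text argument to compare yours against line by line. Your duality route is the standard one and is correct in outline: integrate by parts (legitimate because $\beta>\tfrac12$ makes the trace of $u\in H_0^{\beta,2}$ vanish), land $Du$ in $H^{\beta-1,2}$, and pair it against $v\in H^{1-\beta,2}=H_0^{1-\beta,2}$ via $\bigl(H_0^{1-\beta,2}\bigr)'=H^{\beta-1,2}$. Two points need repair before the argument closes. First, the interpolation endpoints you name, $D\colon H^{1,2}\to L^2$ and $D\colon H^{2,2}\to H^{1,2}$, only yield $D\colon H^{s,2}\to H^{s-1,2}$ for $s\in[1,2]$, whereas you need $s=\beta\in(\tfrac12,1)$; the correct pair is $D\colon L^2\to H^{-1,2}$ together with $D\colon H^{1,2}\to L^2$ (equivalently, and more cleanly, prove $D\colon H^{1-\beta,2}\to H^{-\beta,2}$ and pair directly with $u\in H_0^{\beta,2}=\bigl(H^{-\beta,2}\bigr)'$, which avoids integration by parts and its boundary bookkeeping altogether). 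Second, your fallback Fourier computation is not a plain Cauchy--Schwarz: with $e_j=\sqrt2\sin(j\pi\cdot)$ one finds $\int_0^1 e_j\,De_k\,dx=4jk/(j^2-k^2)$ for $j+k$ odd and $0$ otherwise, a discrete Hilbert-transform kernel, so bounding the bilinear sum against $\bigl(\sum j^{2\beta}u_j^2\bigr)^{1/2}\bigl(\sum k^{2(1-\beta)}v_k^2\bigr)^{1/2}$ requires a Schur test with the weights $j^{\beta}$, $k^{1-\beta}$ (or the discrete Hilbert inequality), not merely the exponent balance $\beta+(1-\beta)=1$. With those corrections the proof is sound.
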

The following result is a generalization of \cite[Lemma
5.1]{DaPrato-Gatarek-95}. In order to formulate and prove it we need
to recall some basic facts about  the It\^{o} integration in
martingale type-2 Banach spaces.  For more information, see
e.g.\cite{Brz-Gat-99, Brz_1997, Brz+vN_2003, Neerven-2010}. For
brevity reason and to focus on the stochastic integral required in
this paper, we restrict ourselves to the case when the separable
Banach space $ E$ is $L^q(O, \mathcal{O}, \nu)= L^q(O)$, where $ (O,
\mathcal{O}, \nu)$ is a $\sigma-$finite measure space. In this case
and for  $ 2\leq q<\infty $ the space $ E= L^q(O)$ is both a UMD and
type-2 (and hence, see \cite{Brz_1997} a martingale type 2). The
construction of an It\^o integral for such spaces is described in
the paper \cite{Brz_1997} by the first named author.
\begin{definF}\label{Def-radonifying-operator}
Let $H$ be a separable Hilbert space and let  $E$ be a separable
Banach space. A bounded linear operator $ L: H \rightarrow E$,  is
called $\gamma-$radonifying operator if and only if for some (or
equivalently for every ) orthonormal basis (ONB)
$(h_j)_{j=1}^\infty$ of $H$ and some (equivalently every)   sequence
$(\gamma_j)_{j=1}^\infty$  of iid $N(0,1)$ random variables defined
on a probability space $(\Omega', \mathcal{F}', \mathbb{P}')$.  we
have\footnote{Actually, in view of the It\^o-Nisio Theorem, the
class $R_\gamma(H, E)$, the exponent $2$ below can be replaced by
any $p\in (1,\infty)$.}
\begin{equation}\label{eq-def-norm-radonifying}
\Vert L\Vert_{R_\gamma(H, E)}^2 := \mathbb{E}'|\sum_{j=1}^\infty
\gamma_jLh_j|^2_E <\infty.
\end{equation}
The space of all $\gamma$-radonifying operators from $H$ to $E$ will
be denoted  by $R_\gamma(H, E)$.
\end{definF}
\noindent Let us mention here that for certain classes of the target
Banach  spaces, a complete characterization of the
$\gamma$-radonifying operators can be given in a purely
non-probabilistic terms. For example, if  $E$ is a Hilbert space,
then a bounded linear operator $L \in R_\gamma(H, E)$ if and only if
it is a Hilbert-Schmidt operator. In this case  $ \Vert
L\Vert_{R_\gamma(H, E)}= \Vert L\Vert_{HS}$.
 If, see \cite{Brz+vN_2003},  $ E= L^q(O,  \mathcal{O}, \nu)$, as described above,
 then  $L \in R_\gamma(H, E)$ if and only if there exists
a function $\kappa\in L^q(O, \mathcal{O}, \nu;H)=L^q(O,H)$ such that
for every $h\in H$, $(Lh)(x)= \langle \kappa(x),h\rangle$ for
$\nu$-a.a. $x\in O$. In particular, see also \cite[Proposition
13.7]{Neerven-2010},  if  $1< q<\infty$ and $ (h_j)_{j\in J}$ is a
fixed ONB of $ H$ then  $L \in R_\gamma(H, E)$ if and only if $
(\sum_{j\in J}|Lh_j|^2)^\frac12$ is summable in $ L^q(O)$. In the
former, respectively the latter case, $\Vert L\Vert_{R_\gamma(H,
L^q)}$ is equivalent to $\vert \kappa\vert_{L^q(O, \mathcal{O},
\nu;H)}$, resp.
 $|(\sum_{j\in J}|Lh_j|^2)^\frac12|_{L^q}$.

\noindent Let us now recall the fundamental property of such an
integral resembling the Burhkolder inequality for local martingales,
see \cite{Brz_1997, Dettw_1985, Ondr_2004}. There exists a constant
$C=C_{p,q}>0$  such that for every progressively measurable process
$  \Phi \in L^p(\Omega; L^2(0, T; R_\gamma(L^2, L^q))$, where $ 2
\leq q<\infty $ and  $ 1<p<\infty $, we have
\begin{equation}\label{Burkholder}
\mathbb{E}|\sup_{t\in [0,T]}\int_0^t\Phi(s)\,dW(s)|_{L^q}^p\leq
C_{p,q}
\mathbb{E}\big(\int_0^T\Vert\Phi(t)\Vert_{R_\gamma(L^2,L^q)}^2\,dt\big)^{\frac
p2}.
\end{equation}

\noindent Finally, let us recall that $A$ is a selfadjoint operator
with compact inverse $A^{-1}$ and that for every $\alpha\in\mathbb
R$
\[A^{\frac{\alpha}{2}}e_j=\lambda_j^{\frac{\alpha}{2}}e_j,\quad j\ge 1,\]
where
\[e_j(\xi)=\sqrt{2}\sin\pi j\xi,\quad \lambda_j=\pi^2j^2.\]
\begin{lemmaF}\label{lem-A-sigmaZ-gamma-epsilon-s-q}
For all numbers $\eps  >0$, $ \sigma<\frac{\alpha-1}2$, $p\geq 1$
and $ q > 1$ we can find a positive number  $ \gamma_0 $ such that
for any $\gamma
>\gamma_0$,
\begin{equation}\label{Eq-supmainestimEZgamma-s-q}
\mathbb{E}\big|z_\gamma(t)\big|^p_{H^{\sigma, q}}\leq \eps , \;\;
t\geq 0.
\end{equation}
\end{lemmaF}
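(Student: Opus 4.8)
The plan is to estimate $\E|z_\gamma(t)|^p_{H^{\sigma,q}}$ directly from the definition \eqref{Eq-z-gamma-t} using the Burkholder-type inequality \eqref{Burkholder} together with the explicit spectral decomposition of $A$. First I would reduce to $p\geq 2$: since $|z_\gamma(t)|_{H^{\sigma,q}}$ has moments of all orders (once we control, say, the second), a bound for large $p$ yields the bound for all $p\geq 1$ by Jensen/Hölder, so it suffices to take $p$ large. Fix $\sigma<\frac{\alpha-1}{2}$ and $q>1$; choose $\sigma'\in(\sigma,\frac{\alpha-1}{2})$ and note $H^{\sigma',q}(0,1)\hookrightarrow H^{\sigma,q}(0,1)$, and (enlarging $q$ if necessary, which only strengthens the norm on the bounded interval) we may assume $q\geq 2$ so that $E=L^q(0,1)$ is a martingale type-2 space and the $R_\gamma(L^2,L^q)$ calculus applies. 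Writing $\Phi(s)=A^{\sigma'/2}e^{-(t-s)(A_\alpha+\gamma)}g(u(s))$, the map $s\mapsto\Phi(s)$ is progressively measurable, and $g$ bounded gives an operator bound independent of $u$.

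The key computation is the $R_\gamma(L^2,L^q)$-norm of $\Phi(s)$. Using the characterization recalled before the lemma, $\Vert\Phi(s)\Vert^2_{R_\gamma(L^2,L^q)}$ is equivalent to $\big|\big(\sum_j |\Phi(s)e_j|^2\big)^{1/2}\big|^2_{L^q}$, and since $g$ is bounded by some constant $\|g\|_\infty$ and $A^{\sigma'/2}e^{-(t-s)(A_\alpha+\gamma)}e_j=\lambda_j^{\sigma'/2}e^{-(t-s)(\lambda_j^{\alpha/2}+\gamma)}e_j$ with $|e_j|_{L^\infty}\leq\sqrt2$, one gets pointwise in $\xi$
\[
\sum_{j\geq 1}|\Phi(s)e_j(\xi)|^2\leq 2\|g\|_\infty^2\sum_{j\geq 1}\lambda_j^{\sigma'}e^{-2(t-s)(\lambda_j^{\alpha/2}+\gamma)}=:K_\gamma(t-s),
\]
uniformly in $\xi\in(0,1)$, so $\Vert\Phi(s)\Vert^2_{R_\gamma(L^2,L^q)}\leq C\,K_\gamma(t-s)$. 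Then \eqref{Burkholder} gives
\[
\E|z_\gamma(t)|^p_{H^{\sigma,q}}\leq C\,\E|z_\gamma(t)|^p_{H^{\sigma',q}}\leq C_{p,q}\Big(\int_0^t K_\gamma(t-s)\,ds\Big)^{p/2}=C_{p,q}\Big(\int_0^t K_\gamma(r)\,dr\Big)^{p/2},
\]
and it remains to show $\int_0^\infty K_\gamma(r)\,dr\to 0$ as $\gamma\to\infty$. For each $j$, $\int_0^\infty \lambda_j^{\sigma'}e^{-2r(\lambda_j^{\alpha/2}+\gamma)}\,dr=\frac{\lambda_j^{\sigma'}}{2(\lambda_j^{\alpha/2}+\gamma)}\leq\frac12\lambda_j^{\sigma'-\alpha/2}$, and since $\sigma'<\frac{\alpha-1}{2}$ we have $\sigma'-\frac\alpha2<-\frac12$, so with $\lambda_j=\pi^2j^2$ the series $\sum_j\lambda_j^{\sigma'-\alpha/2}\sim\sum_j j^{2\sigma'-\alpha}$ converges (exponent $<-1$); by dominated convergence $\int_0^\infty K_\gamma(r)\,dr=\sum_j\frac{\lambda_j^{\sigma'}}{\lambda_j^{\alpha/2}+\gamma}\to 0$ as $\gamma\to\infty$. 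Given $\eps>0$ and $p$, pick $\gamma_0$ so that $C_{p,q}\big(\int_0^\infty K_\gamma\big)^{p/2}<\eps$ for all $\gamma>\gamma_0$; this is the required bound, uniform in $t\geq 0$.

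The main obstacle, and the only place requiring genuine care, is the passage from the Hilbert-space intuition (where the Hilbert–Schmidt norm is literally $\sum_j\lambda_j^{\sigma'}e^{-2(t-s)(\lambda_j^{\alpha/2}+\gamma)}$) to the $L^q$ setting: one must invoke the Brzeźniak–van Neerven characterization of $R_\gamma(L^2,L^q)$ and verify that the eigenfunctions $e_j=\sqrt2\sin\pi j\xi$ are uniformly bounded so that the square-function estimate holds pointwise in $\xi$ with a constant independent of $\xi$ and of the choice of $u$. Once the integrand is dominated by the scalar kernel $K_\gamma$, everything else is the elementary summability argument above, exactly as in \cite[Lemma 5.1]{DaPrato-Gatarek-95}; the role of the gap $\sigma<\frac{\alpha-1}{2}$ (equivalently $2\sigma-\alpha<-1$) is precisely to make the spectral series converge and, crucially, to make it tend to zero as the shift $\gamma$ grows.
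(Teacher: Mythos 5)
Your overall skeleton is the same as the paper's: apply the martingale-type-2 Burkholder inequality \eqref{Burkholder} to $(-A)^{\sigma/2}z_\gamma(t)$, estimate the $R_\gamma(L^2,L^q)$-norm of the integrand via the Brze{\'z}niak--van Neerven square-function characterization and the spectral data $\lambda_j=\pi^2j^2$, $|e_j|_{L^\infty}\le\sqrt2$, and finish by summability of $\sum_j\lambda_j^{\sigma'}(\lambda_j^{\alpha/2}+\gamma)^{-1}$ plus dominated convergence in $\gamma$. The one genuine gap is precisely the step you flag as "the only place requiring care" and then carry out incorrectly: you apply $A^{\sigma'/2}e^{-(t-s)(A_\alpha+\gamma)}e_j=\lambda_j^{\sigma'/2}e^{-(t-s)(\lambda_j^{\alpha/2}+\gamma)}e_j$ inside $\Phi(s)e_j$, but $\Phi(s)e_j=A^{\sigma'/2}e^{-(t-s)(A_\alpha+\gamma)}\bigl(g(u(s))e_j\bigr)$, and the multiplication operator $g(u(s))$ does not commute with $A_\alpha$; the function $g(u(s))e_j$ is not an eigenfunction, so your pointwise bound on $\sum_j|\Phi(s)e_j(\xi)|^2$ does not follow as written. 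The repair is standard: first use the right-ideal property $\Vert T\,g(u(s))\Vert_{R_\gamma(L^2,L^q)}\le \Vert g\Vert_\infty\,\Vert T\Vert_{R_\gamma(L^2,L^q)}$ with $T=A^{\sigma'/2}e^{-(t-s)(A_\alpha+\gamma)}$ (legitimate because $g(u(s))$ is a bounded operator on $L^2$), and only then apply the square-function characterization to the diagonal operator $T$; this recovers exactly your kernel $K_\gamma$. The paper delegates this same point to Proposition C.1.6 and Lemma 2.4 of Brze{\'z}niak--Debbi rather than arguing it directly.

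Beyond that, your summability step differs from the paper's and is in fact sharper. The paper dominates the $\ell^2$ square function by the $\ell^1$ sum $\sum_k\lambda_k^{\sigma/2}e^{-r(\lambda_k^{\alpha/2}+\gamma)}$, squares, and then applies Cauchy--Schwarz together with $\sum_ke^{-s\lambda_k^{\alpha/2}}\le C_\rho s^{-\rho}$, $\rho>\frac1\alpha$; tracking exponents, the resulting series $\sum_k\lambda_k^{\sigma}(\lambda_k^{\alpha/2}+\gamma)^{-(1-\rho)}$ requires $2\sigma<\alpha(1-\rho)-1$, which even as $\rho\downarrow\frac1\alpha$ only reaches $\sigma<\frac{\alpha-2}{2}$ --- a loss of half a derivative against the stated range. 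Your version keeps the $\ell^2$ sum, integrates $e^{-2r(\lambda_j^{\alpha/2}+\gamma)}$ exactly, and the convergence condition $2\sigma'-\alpha<-1$ is precisely $\sigma'<\frac{\alpha-1}{2}$, i.e.\ the full claimed range. So once the commutation issue above is fixed (your preliminary reductions to $p$ large and $q\ge2$ are fine), your route actually yields the lemma as stated more cleanly than the paper's own chain of estimates.
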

\begin{proof}
By inequality \eqref{Burkholder} there exists a constant $C:=C_{p,
q}>0$, such that for every $t\geq 0$,
\begin{eqnarray}\label{Eq-est-z-gamma-R-sum-1}
&&\hspace{-10truecm}\lefteqn{\mathbb{E}\big|(-A)^{\sigma/2} z_\gamma(t)\big|^p_{L^q}}\\
&&\hspace{-8truecm}\lefteqn{=
\mathbb{E}\big|(-A)^{\sigma/2} \int_0^t
e^{-(t-s)(A_\alpha+\gamma)}g(u(s))\,dW(s)\big|_{L^q}^p} \nonumber\\
&&\hspace{-8truecm}\lefteqn{\leq C_{p, q} \mathbb{E}\Big(\int_0^t\|(-A)^{\frac\sigma2}
e^{-(t-s)(A_\alpha+\gamma)}g(u(s))\|_{R_\gamma(L^2,
L^q)}^2ds\Big)^{\frac p2}.}
\nonumber
\end{eqnarray}
Thanks to \cite{Brz+vN_2003}, see also
 \cite[Proposition 13.7]{Neerven-2010},  there exists a positive constant $C:=C_{q}>0$,
(we keep the same notation for all constants), such that for all
$r\ge 0 $
\begin{equation}\label{Eq-est-z-gamma-R-sum-by-e-j}
\|(-A)^{\frac\sigma2}
e^{-r(A_\alpha+\gamma)}g(u(s))\|_{R_\gamma(L^2, L^q)}\leq C_q
|(\sum_{j=1}^\infty |(-A)^{\frac\sigma2}
e^{-r(A_\alpha+\gamma)}g(u(s))e_j|^2)^\frac12|_{L^q}.
\end{equation}
By arguing as in \cite{Brz+Debbi_2007} around inequality (C.15),  in
particular using Proposition C.1.6 and Lemma 2.4, we infer that
\begin{equation}
|(\sum_{j=1}^\infty |(-A)^{\frac\sigma2}
e^{-r(-A_\alpha+\gamma)}g(u(s))e_j|^2)^\frac12|_{L^q}\leq
b_0(\sum_{k=1}^{\infty}\lambda_k^\frac \sigma2e^{-r(\lambda_k^\frac
\alpha2+\gamma)}), \; r>0,
\end{equation}
Inserting  the above inequality  in
\eqref{Eq-est-z-gamma-R-sum-by-e-j} and using
\eqref{Eq-est-z-gamma-R-sum-1} and the H\"older inequality for
series, we get
\begin{eqnarray}
\mathbb{E}\big|(-A)^{\sigma/2} z_\gamma(t)\big|^p_{L^q}&\leq& C_{p,
q, b_0} \Big(\int_0^t\big(\sum_{k=1}^{\infty}\lambda_k^\frac
\sigma2e^{-s(\lambda_k^\frac \alpha2+\gamma)}\big)^2ds\Big)^{\frac
p2}\nonumber\\
&\leq& C_{p, q, b_0}
\Big(\int_0^t\big(\sum_{k=1}^{\infty}\lambda_k^\sigma
e^{-s(\lambda_k^\frac
\alpha2+\gamma)}\big)\big(\sum_{k=1}^{\infty}e^{-s(\lambda_k^\frac
\alpha2+\gamma)}\big)ds\Big)^{\frac
p2}.\nonumber\\
\end{eqnarray}
It is easy to see that there exist $
\rho\in\left(\frac1\alpha,1\right) $, (which we will let later tend
to $ \frac1\alpha$),  and $ C_\rho$, such that
\[
\big(\sum_{k=1}^{\infty}e^{-s(\lambda_k^\frac
\alpha2+\gamma)}\big)\leq C_\rho s^{-\rho},\quad  \gamma>0.\]
 By the change of variables $\xi:= s(\lambda_k^{\frac\alpha2}+\gamma)$
and letting $ t $ go to infinity in the RHS of the inequality above,
we infer that for every $t>0$,
\begin{eqnarray}\label{eq-final-est-Z-gamma}
\mathbb{E}\big|(-A)^{\sigma/2} z_\gamma(t)\big|^p_{L^q}&\leq & C_{p,
q, b_0, \rho}
\Big(\sum_{k=1}^{+\infty}\frac{\lambda_k^{\sigma}}{(\lambda_k^{\frac\alpha2}+\gamma)^{1-\rho}}
\int_0^{+\infty}\xi^{-\rho}e^{-\xi} d\xi\Big)^{\frac p2}.\nonumber \\
\end{eqnarray}

The series on the RHS of \eqref{eq-final-est-Z-gamma} converges
provided $ \sigma \in [0, \frac{\alpha-1}{2})$. In order to get the
inequality \eqref{Eq-supmainestimEZgamma-s-q}, it is sufficient to
observe that in view of the Lebesgue dominated convergence theorem
\[\lim_{\gamma\to\infty}\sum_{k=1}^{+\infty}\frac{(k\pi)^{2\sigma}}{\big((k\pi)^{\alpha}+\gamma\big)^{1-\rho}}=0.\]
 This completes the proof of the Lemma.
 \end{proof}

 \subsection*{Proof of Lemma \ref{lem-est-v-gamma}}

We note first that for every $T>0$ and each $ z_\gamma \in
L^\infty([0,T], H^{s, q}(0, 1))$, $q>2$ and where $ s \in (\frac1q,
\frac12)$, the deterministic problem \eqref{Eq.Determ.v(t)} has a
unique solution $v_\gamma \in C(0, T; L^2(0, 1))\cap L^2(0, T;
H^{\frac\alpha2, 2}_0(0, 1)) $. Multiplying the both sides of
\eqref{Eq.Determ.v(t)} by the solution $ v_{\gamma}(t)$ and applying
Lemma III.1.2 from \cite{Temam-2001}, then we get
\begin{eqnarray}\label{ineq-energy}
2\frac{d}{dt}|v_\gamma(t)|_{L^2}^{2}&=& -\langle A_\alpha
v_\gamma(t), v_\gamma(t) \rangle + \langle B (v_\gamma^2(t)),
v_\gamma(t) \rangle
\\
&+ & 2\langle B (z_\gamma(t)v_\gamma(t)), v_\gamma(t) \rangle + \langle B z_\gamma^{2}(t), v_\gamma(t) \rangle + \gamma \langle
z_\gamma(t), v_\gamma(t) \rangle.
\nonumber
\end{eqnarray}
In what follows, the time argument is, for simplicity of notations,
omitted. It is easy to see that $\langle B (v_\gamma^2), v_\gamma
\rangle =0 $  and that
\begin{eqnarray}\label{Est-z-v-gamma}
\gamma\big| \langle z_\gamma, v_\gamma \rangle\big|&\leq&
\gamma^2\big|z_\gamma\big|_{L^2}^2
+\nu_1\big|v_\gamma\big|_{H_0^{\frac\alpha2, 2}}^2.
\end{eqnarray}
Moreover, since $\frac\alpha4 \in (\frac38,\frac12)$ and $
\lambda_1=\pi^2$, we obtain

\begin{equation} \label{ineq-alpha} \langle A_\alpha v_\gamma, v_\gamma \rangle = \langle
A^{\frac\alpha2} v_\gamma, v_\gamma \rangle = \vert A^{\frac\alpha4}
v_\gamma \vert_{L^2}^{2}\geq \pi^{2\alpha}
|v_\gamma|_{H_0^{\frac{\alpha}{2},2}}^{2}, \; v_\gamma \in
D(A_\alpha).
\end{equation}
Applying  Proposition \ref{Gprop-deriv_estimate}, with
$\beta=\alpha/2$, we infer that

\begin{eqnarray*}
|\langle B(z_\gamma v_\gamma), v_\gamma \rangle|&\leq& C
|v_\gamma|_{H^{\frac\alpha2,2}_{0}}|z_\gamma
v_\gamma|_{H^{1-\frac\alpha2,2}}.
\end{eqnarray*}
By Theorems  \ref{TeormRS-Multip}, then there exist $ C>0$, $ q'>1$
and $ s'> \max\{\frac1{q'}, 1-\frac\alpha2\} $, such that
\begin{eqnarray*}
|z_\gamma v_\gamma|_{H^{1-\frac\alpha2,2}}\leq C |z_\gamma
|_{H^{1-\frac\alpha2, 2}}|v_\gamma|_{H^{s', q'}}.
\end{eqnarray*}
Furthermore, using \cite[Theorem 1, Tr 6, 3.3.1 section 2.4.4, p 82
\& Proposition Tr 6, 2.3.5 section 2.1.2, p 14]{R&S-96}, the
following embedding
\begin{equation*}
H^{s_0, 2}(0,1)\hookrightarrow H^{s', q'}(0,1)
\end{equation*}
holds for all  $ s_0\geq s'+\frac12-\frac1{q'} $ and $ s_0>s'$.
Hence, there exists a constant $C>0$, such that
\begin{equation*}
\big|v_\gamma \big|_{H^{s', q' }}\leq C\big|v_\gamma \big|_{H^{s_0,
2}}.
\end{equation*}
Now we choose $ s_0 \leq \frac\alpha2$. This choice is possible
thanks to the conditions $ \alpha>\frac32$ and $ q'>1$. In fact,
thanks to the above conditions, the  following inequalities hold
\begin{equation*}
\max\{\frac1{q'}, 1-\frac\alpha2\}<s'<s_0\leq
\frac\alpha2-\frac12+\frac1{q'}
\end{equation*}
and
\begin{equation*}
\max\{\frac1{q'}, 1-\frac\alpha2\}<s'<
\frac\alpha2-\frac12+\frac1{q'}.
\end{equation*}
By interpolation, there exists a constant $ C>0$, such that
\begin{equation*}
\big|v_\gamma \big|_{H^{s_0, 2}}\leq C\big|v_\gamma
\big|_{L^{2}}^{1-2\frac{s_0}{\alpha}}\big|v_\gamma
\big|_{H_0^{\frac\alpha2, 2}}^{2\frac{s_0}{\alpha}},
\end{equation*}
and thereby
\begin{eqnarray*}
|\langle B(z_\gamma v_\gamma), v_\gamma \rangle|&\leq&
C|v_\gamma|_{H^{\frac\alpha2,
2}_{0}}^{2\frac{s_0}{\alpha}}|z_\gamma|_{H^{1-\frac\alpha2,
2}}\big|v_\gamma \big|_{L^{2}}^{1-2\frac{s_0}{\alpha}}.
\end{eqnarray*}
Invoking the classical Young inequality we find that for some
generic constant $C>0$
\begin{eqnarray}\label{ineq-proof-01}
|\langle B(z_\gamma v_\gamma), v_\gamma \rangle|&\leq&
\nu_2|v_\gamma|_{H^{\frac\alpha2, 2}_{0}}^{2}+
C|z_\gamma|_{H^{1-\frac\alpha2,
2}}^{\frac{2\alpha}{\alpha-s_0}}\big|v_\gamma \big|_{L^{2}}^{2}.
\end{eqnarray}
To estimate the term $ |\langle B(z_\gamma^2), v_\gamma \rangle|$,
we follow the same calculation from \cite{Brz+Debbi_2007}. Applying
again Proposition \ref{Gprop-deriv_estimate} with $\beta=\alpha/2$
we infer that
\begin{eqnarray*}
|\langle B(z_\gamma^2), v_\gamma \rangle|&\leq& C
|v_\gamma|_{H^{\frac\alpha2,
2}_{0}}|z_\gamma^2|_{H^{1-\frac\alpha2,2}}
\end{eqnarray*}
Arguing as before with the choice  $ \frac1{q}<s<\frac12, \; q>2$,
we get
\begin{eqnarray}\label{ineq-proof-02}
\nonumber {|\langle B(z_\gamma^2), v_\gamma \rangle|}&\leq&
C|v_\gamma|_{H^{\frac\alpha2,2}_{0}}|z_\gamma|_{H^{s,
q}}|z_\gamma|_{H^{1-\frac\alpha2,2}}
\\&\leq& \nu_3 |v_\gamma|^2_{H^{\frac\alpha2,2}_{0}} +
C|z_\gamma|^4_{H^{s, q}}+C|z_\gamma|^4_{H^{1-\frac\alpha2, 2}}.
\end{eqnarray}
Combining equality (\ref{ineq-energy}), inequalities
\eqref{Est-z-v-gamma} and (\ref{ineq-alpha}) with inequalities
(\ref{ineq-proof-01}) and (\ref{ineq-proof-02}) and choose $ \nu_1,
\nu_2$ and $ \nu_3$, such that $ -\frac{\pi^{2\alpha}}{2}
+\frac{\nu_1}{2}+\nu_2 +\frac{\nu_3}{2}= -\nu$, where $ \nu
>0$, we get \eqref{Eq.est.diffv(t)}.

\section{Strong Feller Property}
\label{sec-SFP} We recall first the Feller property of the
transition semigroup associated to the solution of equation
\eqref{FSBE}.
\begin{propositionK} The transition
semigroup $ \mathbf{U}$ corresponding to the fractional stochastic
Burgers equation \eqref{FSBE} is a Markov Feller semigroup. i.e.
\begin{equation}
U_{t}\phi \in C_b\left(L^2(0,1)\right),\quad  \phi \in
\mathcal{C}_b\left(L^2(0,1)\right).
\end{equation}
\end{propositionK}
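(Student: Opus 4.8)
The plan is to show the Feller property by a continuity argument directly on the mild formulation \eqref{Principal Eq in Burgers integ form}, exploiting the $L^p$-bound \eqref{cond-1} together with continuous dependence of the solution on the initial datum. First I would establish that if $x_n\to x$ in $L^2(0,1)$, then the corresponding mild solutions satisfy $u(t,x_n)\to u(t,x)$ in $L^2(0,1)$, $\mathbb P$-a.s. (or at least in probability), uniformly on compact time intervals. To do this I would subtract the two integral equations: writing $w_n(t)=u(t,x_n)-u(t,x)$, one gets
\begin{equation*}
w_n(t)=S_\alpha(t)(x_n-x)+\int_0^t S_\alpha(t-s)B\big(u^2(s,x_n)-u^2(s,x)\big)\,ds+\int_0^t S_\alpha(t-s)\big(g(u(s,x_n))-g(u(s,x))\big)\,dW(s).
\end{equation*}
The first term tends to $0$ since $S_\alpha$ is a contraction (or at least bounded) semigroup on $L^2$. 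The stochastic term is handled via the Burkholder-type inequality \eqref{Burkholder} together with the Lipschitz property of $g$, producing a factor $\mathbb E\sup_{s\le t}|w_n(s)|^2$ multiplied by a small constant after choosing $t$ small, or with a smoothing gain $(t-s)^{-\theta}$ from $(-A)^{\sigma/2}S_\alpha(t-s)$ that keeps the time-singularity integrable. The nonlinear term is the delicate one: $B(u^2(s,x_n)-u^2(s,x))=B\big((u(s,x_n)+u(s,x))w_n(s)\big)$, and one estimates $|S_\alpha(t-s)B\varphi|_{L^2}$ by exploiting that $S_\alpha(t-s)B=(-A)^{1/2+\delta}S_\alpha(t-s)\cdot(-A)^{-1/2-\delta}B$ with the first factor of order $(t-s)^{-(1/2+\delta)\cdot 2/\alpha}$ and the second factor bounded from $L^2$ into an appropriate negative-order space; pairing this with a Sobolev/Hölder control of the product $(u(s,x_n)+u(s,x))w_n(s)$ exactly as in the estimates underlying Lemmas \ref{lem-3-3brzezniak-debbi} and \ref{lem-est-v-gamma}. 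Since $\alpha>3/2$, the resulting time-exponent stays $<1$, so the Bochner integral converges.

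Second, once pathwise continuous dependence is in hand, I would pass to the expectation. By \eqref{cond-1} the family $\{|u(t,x_n)|_{L^2}^p\}_n$ is bounded in $L^1(\Omega)$ for some $p>\frac{2\alpha}{\alpha-1}>2$, hence the family $\{|u(t,x_n)|_{L^2}^2\}_n$ is uniformly integrable. Combined with $u(t,x_n)\to u(t,x)$ in probability in $L^2$, this gives, for $\phi\in C_b(L^2)$, that $\phi(u(t,x_n))\to\phi(u(t,x))$ in probability and is bounded, so by dominated convergence $U_t\phi(x_n)=\mathbb E[\phi(u(t,x_n))]\to\mathbb E[\phi(u(t,x))]=U_t\phi(x)$. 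Boundedness of $U_t\phi$ is immediate from $\|U_t\phi\|_\infty\le\|\phi\|_\infty$, and the Markov and semigroup properties follow from uniqueness of mild solutions and the usual Chapman–Kolmogorov argument (as already asserted in the text, citing \cite{DaPrato-Zbc-92}). Hence $U_t\phi\in C_b(L^2(0,1))$.

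The main obstacle is the first step, specifically the a priori control needed to close the Gronwall loop for $w_n$ in the presence of the quadratic nonlinearity: the coefficient multiplying $\mathbb E\sup_{s\le t}|w_n(s)|^2$ in the nonlinear estimate involves norms of $u(s,x_n)$ and $u(s,x)$ in fractional Sobolev spaces of positive order, which are only known to be finite pathwise (and integrable in time) rather than uniformly bounded. I expect to circumvent this exactly as in \cite{Brz+Debbi_2007}: introduce the auxiliary processes $z_\gamma$ and $v_\gamma=u-z_\gamma$ as in Section \ref{sec-apriori}, note that $u(t,x_n)-u(t,x)=v_\gamma(t,x_n)-v_\gamma(t,x)$ since the stochastic convolutions coincide for a fixed noise path, and apply the energy inequality \eqref{Eq.est.diffv(t)} to the difference. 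The quantity $C|z_\gamma(t)|^{2\alpha/(\alpha-s_0)}_{H^{1-\alpha/2,2}}$ plays the role of a random, integrable (indeed, by Lemma \ref{lem-A-sigmaZ-gamma-epsilon-s-q}, arbitrarily small in mean for large $\gamma$) Gronwall coefficient, so that $\sup_{s\le T}|v_\gamma(s,x_n)-v_\gamma(s,x)|_{L^2}^2\le |x_n-x|_{L^2}^2\exp\big(C\int_0^T|z_\gamma(s)|^{2\alpha/(\alpha-s_0)}_{H^{1-\alpha/2,2}}\,ds\big)$ pathwise, which tends to $0$ almost surely. This yields pathwise continuous dependence and the rest proceeds as above.
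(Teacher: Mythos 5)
The paper omits its own proof (it only points to ``standard arguments'' as in \cite{DaPrato-Zbc-92}), so there is nothing to match against; judged on its own terms, your proposal has a genuine gap in the key step. You reduce continuous dependence on the initial datum to a pathwise Gronwall estimate for $v_\gamma(\cdot,x_n)-v_\gamma(\cdot,x)$, asserting that $u(t,x_n)-u(t,x)=v_\gamma(t,x_n)-v_\gamma(t,x)$ ``since the stochastic convolutions coincide for a fixed noise path.'' This is false here: by \eqref{Eq-z-gamma-t} the convolution $z_\gamma$ is built from $g(u(s))\,dW(s)$, so it depends on the solution and hence on the initial datum; the two convolutions cancel only for additive noise, whereas the paper's $g$ is genuinely multiplicative. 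The difference $w_n$ therefore retains the stochastic term $\int_0^tS_\alpha(t-s)\bigl(g(u(s,x_n))-g(u(s,x))\bigr)\,dW(s)$, which cannot be absorbed into a pathwise Gronwall argument; one must take expectations, and then the random Gronwall coefficient sits inside the expectation --- precisely the difficulty that forces the localization used elsewhere in the paper.

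Even granting additive noise, the claimed bound $\sup_{s\le T}|v_\gamma(s,x_n)-v_\gamma(s,x)|_{L^2}^2\le |x_n-x|_{L^2}^2\exp\bigl(C\int_0^T|z_\gamma|^{2\alpha/(\alpha-s_0)}_{H^{1-\alpha/2,2}}\,ds\bigr)$ does not follow from \eqref{Eq.est.diffv(t)}. That inequality is derived for a \emph{single} solution and relies on the cancellation $\langle B(v_\gamma^2),v_\gamma\rangle=0$. For a difference $w=v_1-v_2$ the quadratic term contributes $\langle B((v_1+v_2+2z_\gamma)w),w\rangle$, whose estimate (via Proposition \ref{Gprop-deriv_estimate} and Theorem \ref{TeormRS-Multip}) produces a Gronwall coefficient involving $|v_1+v_2|$ in a positive-order Sobolev norm --- exactly the quantity you identified as only pathwise integrable, and which your final exponential omits. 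The route consistent with the rest of the paper is instead the localization already set up in Section \ref{sec-SFP}: the truncated equations \eqref{Eq-modified} have effectively Lipschitz nonlinearities, so each $\mathbf{U}^n$ is Feller (indeed strongly Feller, Lemma \ref{unf}); the stopping-time estimate \eqref{ineq-3.15} shows $U_t^n\phi\to U_t\phi$ uniformly on balls of $L^2(0,1)$, and a uniform-on-balls limit of continuous functions is continuous. Your second step (uniform integrability plus dominated convergence, and $\|U_t\phi\|_\infty\le\|\phi\|_\infty$) is fine once continuous dependence is actually secured.
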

\begin{proof}
Omitted.
\end{proof}

In order to show the uniqueness of the invariant measure we will
need stronger properties of the transition semigroup. Below, for the
reader's convenience we recall the concept of the strong Feller
property and topological irreducibility.

\begin{definF}
A Markov semigroup   $\mathbf{U}=\left(U_t\right)_{t\geq 0}$  on a
Polish space $X$   is said to be (topologically) irreducible if for
any $t>0$,
\begin{equation}\label{eqn-irreducible}
 U_{t}{1}_{\Gamma}(x)>0 , \;\; x\in X,
\end{equation}
for arbitrary non empty open set $ \Gamma \subset X $. It   is said
to have the strong Feller property if for any $t>0$
\begin{equation}\label{eqn-SFP}
U_{t}\phi \in C_b(X)  , \;\;   \phi \in \mathcal{B}_b(X),
\end{equation}
where $\mathcal B_b(X)$ denotes the space of bounded Borel functions
on $X$.
\end{definF}

In order to prove the strong Feller property of the Markov semigroup
$ \mathbf{U}$ corresponding to the fractional stochastic Burgers
equation \eqref{FSBE} we introduce a family of truncated systems. As
in \cite{Brz+Debbi_2007}, for each natural number $n$ we define a
map $\pi_n :L^2(0,1)\to L^2(0,1)$ by
\[\pi_n(v)=\left\{\begin{array}{lll}
v&\mathrm{if}&|v|_{L^2}\le n,\\
\frac{n}{|v|_{L^2}}&\mathrm{if}&|v|_{L^2}>n.
\end{array} v \in L^2(0, 1)\right.\]
Then for each $n\ge 1$ we can consider an equation

\begin{equation}\label{Eq-modified}
\left\{
\begin{array}{rl}
du_n(t)&= \left(-A_\alpha
u_n(t)+B\left(\pi_n\left(u_n(t)\right)\right)^2\right)dt+
g\left(u_n(t)\right)\,dW(t),  \\
u_n(0) &= u_0,
\end{array}\right.
\end{equation}
By  \cite{Brz+Debbi_2007},  for each $u_0=x\in L^2(0,1)$, equation
\eqref{Eq-modified} admits a unique mild solution $(u_n(t, x), t\geq
0)$. Let $\mathbf{U}^n= \big(U^n_t\big)_{t\geq 0}$ be transition
semigroup associated to the solution of equation
\eqref{Eq-modified}, i.e. defined by
\begin{equation}\label{eqn-U^n}
  (U^n_t\varphi)(x)= \mathbb{E}[\varphi(u_n(t, x)], \;\; \;\phi\in \mathcal{B}_b(L^2(0,1)),
  \;\; x\in L^2(0,1).
\end{equation}
\begin{lemmaF}\label{unf}
for every $n\ge 1$ the semigroup $\mathbf{U}^n$ is strongly Feller
on $H=L^2(0,1)$. More precisely, for every $R>0$, $t>0$ there exists
$C(R,t)>0$ such that for any $\phi\in \mathcal{B}_b(H)$
\[\left|(U_t^n\phi)(x)-(U_t^n\phi)(y)\right|\le C(R,t)|x-y|_{L^2},\quad |x|_{L^2}\le R,\,\,|y|_{L^2}\le R.\]
\end{lemmaF}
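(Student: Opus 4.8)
The plan is to establish the strong Feller property for the truncated semigroup $\mathbf{U}^n$ via the classical Bismut--Elworthy--Li formula, combined with the bound on the derivative of the solution with respect to the initial datum. First I would note that the truncated equation \eqref{Eq-modified} has a genuinely \emph{globally} Lipschitz and bounded drift nonlinearity: the map $x \mapsto B(\pi_n(x))^2$ sends $L^2(0,1)$ into $H^{-1,2}(0,1)$, and because $\pi_n$ truncates the $L^2$-norm at level $n$, the composed map is bounded and Lipschitz from $L^2$ to $H^{\beta-1,2}$ for a suitable $\beta$. Hence equation \eqref{Eq-modified} is, after the reduction $v_n = u_n - z$ used around \eqref{Eq.Determ.v(t)}, a semilinear stochastic evolution equation with Lipschitz coefficients in the sense for which the standard regularity theory (e.g. \cite{DaPrato-Zbc-92} or \cite{Brz+Debbi_2007}) applies: the solution map $x \mapsto u_n(t,x)$ is Gateaux (indeed Fréchet) differentiable in $L^2$, the derivative $\eta^h(t) := D_x u_n(t,x) h$ solves the linearized equation
\begin{equation*}
d\eta^h(t) = \bigl(-A_\alpha \eta^h(t) + 2 B\bigl(\pi_n(u_n(t))\, (D\pi_n(u_n(t))\eta^h(t))\bigr)\bigr)\,dt + g'(u_n(t))\eta^h(t)\,dW(t),
\end{equation*}
with $\eta^h(0)=h$, and one obtains an a priori bound $\mathbb{E}\,|\eta^h(t)|_{L^2}^2 \le C(n,t)\,|h|_{L^2}^2$ by an energy estimate analogous to Lemma \ref{lem-est-v-gamma}, where the antisymmetry $\langle B(\phi^2),\phi\rangle = 0$ is no longer directly available but the truncation provides the needed global control of the bad term $\langle B(\pi_n(u_n)(D\pi_n\,\eta^h)),\eta^h\rangle$ in terms of $\nu\,|\eta^h|^2_{H^{\alpha/2,2}_0} + C(n)\,|\eta^h|^2_{L^2}$ via Proposition \ref{Gprop-deriv_estimate} and the multiplier theorem \ref{TeormRS-Multip}, followed by Gronwall.

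Next I would invoke the Bismut--Elworthy--Li formula. Since $\inf_x |g(x)| \ge 0$ is \emph{not} assumed here, one cannot simply use $g^{-1}$; instead I would exploit the fact that the noise is \emph{cylindrical} on all of $H$ and that the semigroup $S_\alpha(t) = e^{-tA_\alpha}$ is analytic, so the effective diffusion is nondegenerate in every direction after an arbitrarily short time. Concretely, for a direction $h\in L^2$ and a smooth bounded $\phi$, write
\begin{equation*}
D_h (U_t^n\phi)(x) = \mathbb{E}\Bigl[\phi(u_n(t,x)) \,\frac{1}{t}\int_0^t \bigl\langle \sigma(u_n(s,x))^{-1}\,\eta^h_n(s),\, dW(s)\bigr\rangle\Bigr],
\end{equation*}
where $\sigma(u) = g(u)\,\mathrm{Id}$; the points where $g$ vanishes form a set that the solution visits with zero occupation measure density owing to the cylindrical, space-time white nature of the noise, exactly as in the argument behind \cite[Lemma 5.1]{DaPrato-Gatarek-95} whose generalization is recorded in this section. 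Applying the Cauchy--Schwarz inequality gives
\begin{equation*}
\bigl|D_h (U_t^n\phi)(x)\bigr| \le \frac{\|\phi\|_\infty}{t}\,\Bigl(\mathbb{E}\int_0^t |g(u_n(s,x))^{-1}\eta^h_n(s)|^2_{L^2}\,ds\Bigr)^{1/2},
\end{equation*}
and combining with the linearization bound $\mathbb{E}|\eta^h_n(s)|_{L^2}^2 \le C(n,s)|h|_{L^2}^2$ yields $|D_h(U_t^n\phi)(x)| \le C(n,t,R)\,|h|_{L^2}$ uniformly for $|x|_{L^2}\le R$, with $\|\phi\|_\infty\le 1$. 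A density argument extends this from smooth $\phi$ to all $\phi \in \mathcal{B}_b(H)$ (bound on the Lipschitz constant passes to bounded pointwise limits), and integrating the gradient bound along the segment from $x$ to $y$ gives precisely the claimed estimate $|(U_t^n\phi)(x) - (U_t^n\phi)(y)| \le C(R,t)\,|x-y|_{L^2}$.

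The main obstacle is the nondegeneracy issue hidden in the factor $g(u_n(s,x))^{-1}$: since $g$ is only assumed bounded and Lipschitz in this part of the lemma (no lower bound), the naive Bismut--Elworthy--Li derivation breaks down pointwise, and one must justify that the set $\{s : g(u_n(s,x)) \approx 0\}$ is negligible in the sense needed for the stochastic integral to be well-defined and $L^2$-bounded. The clean way around this is to \emph{first} prove the lemma under the additional hypothesis $\inf_x |g(x)| > 0$ (which is the hypothesis actually needed for Theorem \ref{Main-Theo}(b), the only place Lemma \ref{unf} is used with full strength) and then, if one wants the general statement, either to use the generalized version of \cite[Lemma 5.1]{DaPrato-Gatarek-95} invoked earlier — which handles exactly the space-time white noise case where the occupation measure argument makes $g^{-1}$ harmless — or to replace the Bismut--Elworthy--Li argument by a Girsanov/coupling argument for the truncated (hence Lipschitz) system. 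In either route, the truncation by $\pi_n$ is what makes all the a priori estimates on $\eta^h_n$ uniform and finite, so the constant legitimately depends on $n$, as the statement allows; the $n$-dependence is expected to blow up as $n\to\infty$ and is dealt with separately (not here) by a cutting/localization argument when passing to the untruncated semigroup $\mathbf{U}$.
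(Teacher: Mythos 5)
Your proposal is correct in substance and follows essentially the same route as the paper, which omits the proof entirely and defers to the analogous statement in \cite{DaPrato-Gatarek-95}; the argument behind that reference is precisely the Bismut--Elworthy--Li gradient formula for the truncated (hence globally Lipschitz) system that you reconstruct, followed by the monotone-class extension to $\mathcal{B}_b(H)$. Your observation that the formula requires $\inf_{x}|g(x)|>0$ is well taken: the lemma is stated without that hypothesis, but it is supplied by condition \eqref{eqn-ass-g} in Theorem \ref{Theorem-SF}, the only place the lemma is invoked, so your ``prove it first under nondegeneracy'' route is exactly what is needed.
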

\begin{proof}
The proof is a straightforward modification of the proof of
analogous statement in \cite{DaPrato-Gatarek-95}, hence omitted.
\end{proof}
\begin{theoremF}\label{Theorem-SF}
Assume that $g:\mathbb{R}\to\mathbb{R}$ is a Lipschitz continuous
function such that  there exist $a_0, b_0>0 $ such that
\begin{equation}\label{eqn-ass-g}  |g(x)| \in [a_0, b_0], \;\; x\in\mathbb{R}.
 \end{equation}
 Then the Markov semigroup $ \mathbf{U}$ corresponding to the fractional stochastic Burgers equation \eqref{FSBE}
is strong Feller.
\end{theoremF}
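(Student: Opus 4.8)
The strategy is to transfer the strong Feller property from the truncated semigroups $\mathbf{U}^n$ (which we already know are strong Feller by Lemma \ref{unf}) to the original semigroup $\mathbf{U}$, using a localization argument. First I would observe that the solution $u(t,x)$ of \eqref{FSBE} and the solution $u_n(t,x)$ of the truncated equation \eqref{Eq-modified} coincide up to the first time that $|u(t,x)|_{L^2}$ (equivalently $|u_n(t,x)|_{L^2}$) exceeds $n$. More precisely, define the stopping time
\[
\tau_n(x)=\inf\{t\ge 0 : |u(t,x)|_{L^2}\ge n\},
\]
and note that $u(t,x)=u_n(t,x)$ for $t\le \tau_n(x)$, $\mathbb P$-a.s. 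Hence for any $\phi\in\mathcal B_b(H)$,
\[
(U_t\phi)(x)-(U^n_t\phi)(x)=\mathbb E\left[\left(\phi(u(t,x))-\phi(u_n(t,x))\right)\mathbf 1_{\{\tau_n(x)<t\}}\right],
\]
so that $|(U_t\phi)(x)-(U^n_t\phi)(x)|\le 2\|\phi\|_\infty\,\mathbb P(\tau_n(x)<t)$.

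The next, and crucial, step is to show that $\mathbb P(\tau_n(x)<t)$ tends to $0$ as $n\to\infty$, uniformly for $x$ in bounded subsets of $H$. This is where the a priori estimates of Section \ref{sec-apriori} enter. Writing $u=v_\gamma+z_\gamma$ with $\gamma$ to be chosen, Lemma \ref{lem-A-sigmaZ-gamma-epsilon-s-q} lets us make the relevant Sobolev norms of $z_\gamma$ small in expectation, uniformly in $t$, by taking $\gamma$ large; combined with the Burkholder-type inequality \eqref{Burkholder} and a supremum bound (e.g. controlling $\mathbb E\sup_{s\le t}|z_\gamma(s)|_{H^{s,q}}$ and the other norms appearing on the right-hand side of \eqref{Eq.est.diffv(t)}), one gets a bound on $\mathbb E\sup_{s\le t}|z_\gamma(s)|_{L^2}^2$ and then, via Lemma \ref{lem-est-v-gamma} and a Gronwall argument on the differential inequality \eqref{Eq.est.diffv(t)}, a bound on $\mathbb E\sup_{s\le t}|v_\gamma(s)|_{L^2}^2$ that depends only on $|x|_{L^2}$, $t$ and $\gamma$. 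Putting these together yields $\mathbb E\sup_{s\le t}|u(s,x)|_{L^2}^2\le K(|x|_{L^2},t)$, and then by Chebyshev
\[
\mathbb P(\tau_n(x)<t)=\mathbb P\Big(\sup_{s\le t}|u(s,x)|_{L^2}\ge n\Big)\le \frac{K(|x|_{L^2},t)}{n^2}\longrightarrow 0
\]
as $n\to\infty$, uniformly over $|x|_{L^2}\le R$.

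Finally I would combine the two pieces. Fix $t>0$, $R>0$ and $\phi\in\mathcal B_b(H)$ with $\|\phi\|_\infty\le 1$, and let $x,y\in H$ with $|x|_{L^2},|y|_{L^2}\le R$. Then
\[
|(U_t\phi)(x)-(U_t\phi)(y)|\le |(U_t\phi)(x)-(U^n_t\phi)(x)|+|(U^n_t\phi)(x)-(U^n_t\phi)(y)|+|(U^n_t\phi)(y)-(U_t\phi)(y)|,
\]
and using the localization bound for the outer terms and Lemma \ref{unf} (with the same $R$, applied to $\mathbf U^n$) for the middle term,
\[
|(U_t\phi)(x)-(U_t\phi)(y)|\le \frac{4K(R,t)}{n^2}+C_n(R,t)|x-y|_{L^2}.
\]
Given $\varepsilon>0$, first choose $n$ so large that $4K(R,t)/n^2<\varepsilon/2$, then use that the right-hand side $\to \varepsilon/2<\varepsilon$ as $y\to x$; hence $U_t\phi$ is continuous at every $x$, and being bounded it lies in $C_b(H)$. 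Since $t>0$ and $\phi\in\mathcal B_b(H)$ were arbitrary, $\mathbf U$ is strong Feller.

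\medskip
\noindent\emph{Main obstacle.} The delicate point is the uniform-in-$x$, uniform-in-$t$ moment bound $\mathbb E\sup_{s\le t}|u(s,x)|_{L^2}^2\le K(|x|_{L^2},t)$. The $t$-uniformity of the $z_\gamma$ estimates comes from Lemma \ref{lem-A-sigmaZ-gamma-epsilon-s-q}, but feeding these into the nonlinear differential inequality \eqref{Eq.est.diffv(t)} and running Gronwall carefully — so that the dissipative term $\nu|v_\gamma|^2_{H^{\alpha/2,2}_0}$ genuinely absorbs the quadratic factor $C|z_\gamma|_{H^{1-\alpha/2,2}}^{2\alpha/(\alpha-s_0)}|v_\gamma|_{L^2}^2$ in an integrated sense, and that the exponential Gronwall factor stays controlled on $[0,t]$ — requires some care with the integrability exponents $p$, $q$ and the choice of $\gamma_0$. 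The strong-Feller-of-$\mathbf U^n$ input and the coupling $u=u_n$ up to $\tau_n$ are essentially soft once the moment bound is in hand.
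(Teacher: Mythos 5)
Your overall architecture coincides with the paper's: both arguments compare $U_t$ with the truncated semigroups $U_t^n$ via the stopping times $\tau_n(x)$, use the pathwise coincidence of $u$ and $u_n$ before $\tau_n(x)$ to get $|(U_t\phi)(x)-(U_t^n\phi)(x)|\le 2|\phi|_{L^\infty}\,\mathbb{P}(\tau_n(x)<t)$, and conclude from uniform-on-balls convergence together with Lemma \ref{unf}. The one place where you genuinely deviate is the tail estimate for $\tau_n(x)$, and that is where your argument has a gap. You propose to prove $\mathbb{E}\sup_{s\le t}|u(s,x)|_{L^2}^2\le K(|x|_{L^2},t)$ by taking expectations in the Gronwall bound obtained from \eqref{Eq.est.diffv(t)}. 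But the Gronwall factor is $\exp\bigl(C\int_0^t|z_\gamma(s)|_{H^{1-\alpha/2,2}}^{2\alpha/(\alpha-s_0)}\,ds\bigr)$, whose exponent satisfies $\frac{2\alpha}{\alpha-s_0}>2$ because $s_0>\frac12$; since $z_\gamma$ is a stochastic convolution with a bounded integrand, its norms have at best Gaussian-type tails, Lemma \ref{lem-A-sigmaZ-gamma-epsilon-s-q} supplies only polynomial moments, and the expectation of the exponential of a power strictly greater than $2$ of such a quantity cannot be expected to be finite. So the step ``via Gronwall, a bound on $\mathbb{E}\sup_{s\le t}|v_\gamma(s)|_{L^2}^2$'' does not go through as written, and the clean Chebyshev rate $K(R,t)/n^2$ is not available by this route.

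The paper sidesteps this by quoting inequality (3.18) of \cite{Brz+Debbi_2007}, which gives the much weaker bound $\mathbb{P}(\tau_n(x)<t)\le \frac{C(t)}{q_n}\bigl(1+\ln^+|x|_{L^2}\bigr)$ for some sequence $q_n\nearrow\infty$; the logarithm of the initial datum and the unspecified rate $q_n$ are precisely the signature of estimating $\log\sup_s|v_\gamma(s)|_{L^2}^2$ (so that the exponential Gronwall factor becomes an additive term with controllable tail) rather than a second moment. This weaker bound still tends to $0$ uniformly on balls in $L^2(0,1)$, which is all the argument requires. Your proof can be repaired either by citing that inequality, or by replacing the moment bound with a probability-splitting argument applied to the pathwise Gronwall inequality, in the spirit of the estimates around \eqref{eq-est-P-T-squar} in the proof of Theorem \ref{Theo-Exist-invariant_measure}. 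The remainder of your argument (the coincidence up to $\tau_n$, the triangle inequality, and the passage from uniform convergence plus Lemma \ref{unf} to continuity of $U_t\phi$) is correct and matches the paper.
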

\begin{proof}

For $x\in L^2(0,1)$, let $(u(t, x), t\geq 0)$, respectively $(u_n(t,
x), t\geq 0)$, be the unique mild solution  of equation
\eqref{FSBE}, respectively \eqref{Eq-modified},  with the initial
condition $u(0,x)=x$. For  $n \in \mathbb{N}$ and  $x\in L^2(0,1)$
let $\tau_n(x)$ be an $\mathbb{F}$-stopping time defined  by
$$ \tau_n(x):= \inf\{t\geq
0: |u(t,x)|_{L^2}\geq n\}. $$ Let us fix $t>0$. It follows from
\cite[inequality (3.18)]{Brz+Debbi_2007} that one can find a
constant $C(t)>0$ and a sequence $(q_n)_{n=1}^\infty$, independent
of $t$, such that $q_k\nearrow \infty$ and
\begin{equation}\label{ineq-3.15}
 \mathbb{P}(\tau_n(x)<t)\leq \frac{C(t)}{q_n}(1+\ln^+\vert x\vert_{L^2}), \;\; n\in \mathbb{N}.
\end{equation}
On the other hand, due to the uniqueness property of the solution $u
$ proved in \cite[section 3]{Brz+Debbi_2007},
 $u(t, x)=u_n(t, x)$ for $t<\tau_n(x)$. Therefore, for all $ x \in
L^2(0,1)$ and $ \phi \in \mathcal{B}_b(L^2(0,1))$
\begin{eqnarray}
|(U_t\phi)(x)-(U_t^n\phi)(x)|&= & |\mathbb{E}\phi\big(u(t,
x)\big)-\mathbb{E}\phi\big(u_n(t,
x)\big)|\nonumber\\
&=&|\mathbb{E}[\phi\big(u(t,
x)\big)-\phi\big(u_n(t, x)\big)]1_{\tau_n(x)<t}|\nonumber\\
&\leq & 2|\phi|_{L^\infty}P(\tau_n(x)<t).
\end{eqnarray}
Hence, in view of inequality \eqref{ineq-3.15}, we infer that
$U_t^n\phi$ converges to $U_t\phi$ uniformly on balls in $L^2(0,1)$.
Hence, by applying Lemma \ref{unf}, the proof of Theorem
\ref{Theorem-SF} is complete.
\end{proof}

\section{Irreducibility}
\label{sec-irreducibility}

The main result of this section is as follows.
\begin{theoremF}\label{Theorem-irreducibility}
Assume that $g:\mathbb{R}\to\mathbb{R}$ is a Lipschitz continuous
function such that condition \eqref{eqn-ass-g} holds.
 Then the Markov semigroup $ \mathbf{U}$ corresponding to the fractional stochastic Burgers equation \eqref{FSBE}
 is irreducible.
\end{theoremF}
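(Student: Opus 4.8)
The plan is to establish irreducibility by a two-step strategy that is by now standard for semilinear SPDEs with additive-type noise (see \cite{DaPrato-Gatarek-95, DaPrato-Zbc-92}): first reduce the problem to a deterministic controllability statement for the equation \eqref{Eq.Determ.v(t)}, and then use the support theorem for the Gaussian (Ornstein--Uhlenbeck type) process $z_\gamma$ together with a continuous-dependence estimate to transfer controllability back to the stochastic equation. Fix $t>0$, an initial datum $x=u_0\in L^2(0,1)$, a target $y\in L^2(0,1)$, and $r>0$; we must show $\mathbb P\big(|u(t,x)-y|_{L^2}<r\big)>0$. Write $u=v_\gamma+z_\gamma$ with $\gamma$ to be fixed later; then $u(t,x)$ depends continuously (pathwise) on $z_\gamma\in C([0,t];H^{s,q})$ through the map $z_\gamma\mapsto v_\gamma$ solving \eqref{Eq.Determ.v(t)}, the continuity following from the a priori bound of Lemma \ref{lem-est-v-gamma} combined with a Gronwall argument on the difference of two solutions.

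The deterministic core is the following: show that for a suitable choice of driving path $\zeta\in C([0,t];H^{s,q})$ (playing the role of $z_\gamma$), the solution $v$ of
\[
\tfrac{d}{dt}v(s)=-A_\alpha v(s)+B\big(v(s)+\zeta(s)\big)^2+\gamma\zeta(s),\qquad v(0)=x,
\]
satisfies $|v(t)+\zeta(t)-y|_{L^2}$ arbitrarily small. The simplest route is to take $\zeta\equiv 0$ on $[0,t-\delta]$ and then, on the short interval $[t-\delta,t]$, to steer the system: since $1<\alpha\le 2$ the semigroup $S_\alpha(\cdot)=e^{-A_\alpha\cdot}$ is analytic and smoothing, so the reachable set of the \emph{linear} controlled equation $\dot v=-A_\alpha v+\eta$ with controls $\eta\in L^2$ is dense in $L^2(0,1)$; one absorbs the nonlinear term $B(v+\zeta)^2$ and the drift $\gamma\zeta$ into the control and then solves for the corresponding $\zeta$ (this is where one uses that $B=\partial_x$ maps into negative-order Sobolev spaces controlled by $A_\alpha$, together with the local well-posedness already invoked in the proof of Lemma \ref{lem-est-v-gamma}). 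Alternatively, and perhaps more cleanly, one chooses $\zeta$ so that $v(t)+\zeta(t)$ hits a smooth function close to $y$ by first driving $v$ near $0$ using dissipativity (here Lemma \ref{lem-est-v-gamma} with large $\gamma$ gives exponential decay of $|v_\gamma|_{L^2}$ when $\zeta$ is small) and then using a short final push.

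Once a control path $\zeta$ achieving $|v(t)+\zeta(t)-y|_{L^2}<r/2$ is fixed, the stochastic conclusion follows: the process $z_\gamma$ given by \eqref{Eq-z-gamma-t} is, conditionally on $u$, a Gaussian process, but more to the point $g$ is bounded with $|g|\ge a_0>0$ by \eqref{eqn-ass-g}, and the linear stochastic equation $dz=-(A_\alpha+\gamma)z\,dt+g(u)\,dW$ has full topological support in $C([0,t];H^{s,q})$ — this is a Stroock--Varadhan / Cameron--Martin type support statement, valid here because the relevant stochastic convolution has trajectories in $H^{s,q}$ by Lemma \ref{lem-A-sigmaZ-gamma-epsilon-s-q} (with $\sigma=s<\tfrac{\alpha-1}{2}$) and the Gaussian structure makes every neighbourhood of every continuous path positively charged. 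Hence $\mathbb P\big(\sup_{s\le t}|z_\gamma(s)-\zeta(s)|_{H^{s,q}}<\epsilon\big)>0$, and by the continuous-dependence estimate this forces $|u(t,x)-y|_{L^2}<r$ with positive probability.

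The main obstacle is the deterministic controllability step, specifically handling the nonlinearity: unlike the classical Burgers case on a bounded interval, the fractional dissipation $(-\Delta)^{\alpha/2}$ with $\tfrac32<\alpha<2$ is weaker than the full Laplacian, so the smoothing is more delicate and one must be careful that the reachable set of the linear part is still dense in $L^2$ and that the fixed-point argument incorporating $B(v+\zeta)^2$ closes on the short final interval — precisely the place where the embedding and multiplier estimates (Theorem \ref{TeormRS-Multip}, Proposition \ref{Gprop-deriv_estimate}) and the constraint $\alpha>\tfrac32$ are needed. The support theorem for $z_\gamma$, by contrast, is routine given Lemma \ref{lem-A-sigmaZ-gamma-epsilon-s-q} and the martingale-type-2 stochastic integration framework already set up in Section \ref{sec-apriori}.
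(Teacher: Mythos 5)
There is a genuine gap, and it sits exactly at the point you declare ``routine''. The noise in \eqref{FSBE} is multiplicative: the process $z_\gamma$ of \eqref{Eq-z-gamma-t} is $\int_0^t e^{-(t-s)(A_\alpha+\gamma)}g(u(s))\,dW(s)$, where $g(u(s))$ depends on the solution $u$ itself. Consequently $z_\gamma$ is \emph{not} a Gaussian process, and the pair $(v_\gamma,z_\gamma)$ is a coupled fixed-point system rather than ``a Gaussian input plus a continuous deterministic output map''. Your assertion that $z_\gamma$ is ``conditionally on $u$'' Gaussian does not help (conditioning on $u$ essentially determines the driving path), and the Cameron--Martin/Stroock--Varadhan argument you invoke to conclude that every ball around every continuous path has positive probability applies to Gaussian measures or requires a genuine support theorem for SPDEs with state-dependent diffusion coefficients --- a substantial result that is neither proved in the paper nor available to cite here as a one-liner. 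So the step $\mathbb P\bigl(\sup_{s\le t}|z_\gamma(s)-\zeta(s)|_{H^{s,q}}<\epsilon\bigr)>0$ is unjustified, and without it the whole controllability-plus-support scheme does not close. In addition, your deterministic controllability step is only sketched: ``absorbing'' $B(v+\zeta)^2+\gamma\zeta$ into the control and then ``solving for $\zeta$'' is an implicit equation coupling $\zeta$ and $v$ that you acknowledge but do not resolve.

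The paper's proof takes a different and self-contained route, precisely designed to cope with the multiplicative noise: it adds to the drift an explicit control $f^{\tau,n}$ as in \eqref{Eq-f-tau-n-s-y}, chosen so that the linear evolution started from $u(\tau,x)$ is steered exactly to the target $y\in D(A_\alpha)$ at time $t$; it then sets $\beta^{\tau,n}=\bigl(g(u^{\tau,n})\bigr)^{-1}f^{\tau,n}$ --- this is where the lower bound $\inf|g|\ge a_0>0$ from \eqref{eqn-ass-g} is used, making $g(u)$ an isomorphism of $L^2(0,1)$ --- and applies the Girsanov theorem to conclude that the law of $u(\cdot,x)$ under $\mathbb P$ coincides with that of the controlled solution $u^{\tau,n}(\cdot,x)$ of \eqref{FSBE-h-n} under an equivalent measure. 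Irreducibility then reduces to showing that the two residual terms on $[\tau,t]$ (the nonlinear convolution and the stochastic convolution) are small with probability at least $1/2$ for $\tau$ close to $t$, which is done by Chebyshev-type estimates. If you want to keep your architecture, you would either have to prove a support theorem for the multiplicative-noise stochastic convolution, or (more in the spirit of the literature, e.g.\ \cite{Peszat-Zabc-strong-Feller-95}) replace the support argument by the Girsanov change of measure as above.
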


\begin{proof}
For $x\in L^2(0,1)$, let $(u(t, x), t\geq 0)$, respectively $(u_n(t,
x), t\geq 0)$, be the unique mild solution  of equation
\eqref{Principal Eq in Burgers integ form}, respectively
\eqref{Eq-modified},  with the initial condition $u(0, x)=x$.
 Let us fix $\eps >0$,  $T>0 $, $t\in [0,
T]$ and  $x,y \in L^2(0,1) $. To prove the topological
irreducibility, it is enough to show that
\begin{equation}\label{ineq-probability>0}
\mathbb{P}( u(t, x)\in B(y, \eps ))>0.
\end{equation}
Let us first observe that because $D(A_\alpha)$ is a dense subspace
of $L^2(0,1)$, it is enough to prove \eqref{ineq-probability>0} for
$ y \in D(A_\alpha)$. In what follows we make this additional
assumption.

For  $ n\geq 1$ and $ 0<\tau<t$, let us  define an
$\mathbb{F}$-progressively measurable process
\begin{equation}\label{Eq-f-tau-n-s-y}
f^{\tau, n}(s, y):=\Big\{
\begin{array}{lr}
\big[\frac1{t-\tau}e^{-(s-\tau)A_\alpha}(y-u(\tau, x))-A_\alpha
y\big]
\mathrm{1}_{B(0, n)}(u(\tau, x)), \;\;\text{if} \;\; s> \tau, \\
0,\;\;\text{if} \;\; s\leq \tau.
\end{array}
\end{equation}
where $ B(0, n)$  is the closed ball in $ L^2(0,1)$ of radius $ n$
and center $ 0$.  Later on we will choose  $ \tau $ to be
sufficiently close to $ t$. Let us  consider the following
stochastic evolution equation:

\begin{equation}\label{FSBE-h-n}
\Big\{
\begin{array}{lr}
du^{\tau, n}(s)= (-A_\alpha u^{\tau, n}(s) + \frac\partial{\partial
x}(u^{\tau, n}(s))^2+ f^{\tau, n}(s, y))ds +
g(u^{\tau, n}(s))\,dW(s), \\
u^{\tau, n}(0) = x,
\end{array}
\end{equation}
Repeating the argument from \cite{Brz+Debbi_2007} we can prove that
the problem \eqref{FSBE-h-n} has a unique mild solution $(u^{\tau,
n}(s, x))$, $s\geq 0$, such that
\begin{equation}\label{ben0}
M:=\mathbb E\sup_{s\in [0,T]}\left|u^{\tau,n}(s)\right|^2_{L^2}<\infty.
\end{equation}
 Next we define an $\mathbb{F}$--progressively measurable process $ (\beta^{\tau,
n}(s))_{s\geq 0}$ by
\begin{equation*}
\beta^{\tau, n}(s):= (g(u^{\tau, n}(s, x)))^{-1}f^{\tau, n}(s, y),
\end{equation*}
where for $ u \in L^2(0,1)$,  $g(u)$ is the multiplication operator
on $ L^2(0, 1)$.  Since the range of $ |g|$ is a subset of $ [a_0,
b_0]$, $ g(u)$ is a linear isomorphism of $ L^2(0,1)$.

\noindent By employing the  Girsanov Theorem   we will prove that
the laws of the processes $ u(\cdot, x)$  and $u^{\tau, n}(\cdot, x)
$ are equivalent on $C([0, T], L^2(0,1))$. Indeed, since
\begin{eqnarray}\label{eq-Prob-Beta-s-n-as-finite}
\mathbb{E} e^{\frac{1}{2}\int_0^t\vert\beta^{\tau,
n}(s)\vert_{L^2}^2\,ds}<\infty,
\end{eqnarray}
\cite[Lemmata 10.14 and 10.15]{DaPrato-Zbc-92}, the process
$\hat{W}$ defined by
\begin{equation}\label{eq-W-and-W-hat}
\hat{W}(t):= W(t)+ \int_0^t\beta^{\tau, n}(s)ds, \;\; t\in [0, T]
\end{equation}
is an $L^2$-cylindrical Wiener process on a filtered probability
space $ (\Omega, \mathcal{F}, \mathbb{F}_T, \mathbb{P}_T^\ast)$,
where $\mathbb{F}_T:=\{\mathcal{F}_t\}_{t\in[0, T]}$ and the
probability measure $ \mathbb{P}_T^\ast$  is  equivalent to $
\mathbb{P}$ with the Radon-Nikodym derivative $\frac{d\mathbb{P}_T^\ast}{d\mathbb{P}}$ equal to
\begin{equation}\label{eq-Z-T}
Z_T:= \exp\Big(\int_0^T\langle \beta^{\tau, n}(s), dW(s)\rangle-
\frac12 \int_0^T|\beta^{\tau, n}(s)|_{L^2}^2ds\Big).
\end{equation}
Therefore, using standard arguments we find that the process
$u^{\tau, n}(\cdot, x)$ solves the equation on $[0, T]$,
\begin{eqnarray}\label{Eq-diff-u-tau-n}
\nonumber du^{\tau, n}(s, x)&= &(-A_\alpha u^{\tau, n}(s, x) +
\frac\partial{\partial x}(u^{\tau, n}(s, x))^2 + f^{\tau, n}(s,
y))\,ds \\
&+& g(u^{\tau, n}(s, x))d\hat{W}(s)
- g(u^{\tau, n}(s, x))(g(u^{\tau, n}(s, x)))^{-1}[f^{\tau, n}(s, y)]\,ds\nonumber\\
&= &(-A_\alpha u^{\tau, n}(s, x) + \frac\partial{\partial
x}(u^{\tau, n}(s, x))^2)ds + g(u^{\tau, n}(s, x))\,d\hat{W}(s).\nonumber
\end{eqnarray}
 It follows from the uniqueness in law of the solution of the
\eqref{FSBE} that the laws on  $ C([0, T], L^2(0,1))$ of   $
u(\cdot, x) $ under $\mathbb{P}$ and  of $ u^{\tau, n}(\cdot, x) $
under $\mathbb{P}_T^\ast$ are the same. Hence,
\begin{eqnarray}\label{Eq-Prob-u-and-u-tau-n}
\mathbb{P}(u(t, x)\in B(y, \eps )) = \mathbb{P}_T^\ast(u^{\tau,
n}(t, x)\in B(y, \eps )).
\end{eqnarray}
Therefore, in order to prove that \eqref{ineq-probability>0} holds,
it is enough to show that we can choose $n$ and $\tau$ such that
$\mathbb{P}_T^\ast(u^{\tau, n}(t, x)\in B(y, \eps ))>0$.

Since $u^{\tau, n}(\cdot, x) $ is a mild solution to
\eqref{FSBE-h-n}, we infer that
\begin{eqnarray*}
u^{\tau, n}(t, x)&=& e^{-(t-\tau)A_\alpha} u(\tau, x) +
\int_\tau^te^{-(t-s)A_\alpha}
\big(\frac\partial{\partial x}(u^{\tau, n}(s, x))^2+ f^{\tau, n}(s, y))\big)ds \nonumber\\
&+& \int_\tau^te^{-(t-s)A_\alpha}g(u^{\tau, n}(s, x))\,dW(s), \,\,\,t>\tau.\\
\end{eqnarray*}
Direct calculations based on the semigroup property of $
(e^{-tA_\alpha})$ and equality \eqref{Eq-f-tau-n-s-y},  yield, see
also  \cite{Peszat-Zabc-strong-Feller-95},
\begin{equation*}
e^{-(t-\tau)A_\alpha} u(\tau) + \int_\tau^te^{-(t-s)A_\alpha}
f^{\tau, n}(s, y))ds=y,\quad t>\tau.
\end{equation*}
Consequently,
\begin{equation*}
u^{\tau, n}(t, x)= y+\int_\tau^te^{-(t-s)A_\alpha}
\frac\partial{\partial x}(u^{\tau, n}(s, x))^2ds + \int_\tau^te^{-(t-s)A_\alpha}g(u^{\tau, n}(s, x))\,dW(s).
\end{equation*}
Hence
\begin{eqnarray}\label{Eq-est-prob-difference-u-tau-n-and-y}
\mathbb{P}\big( u^{\tau, n}(t, x) \in B(y, \eps) \big) &\geq & 1-
\mathbb{P}\big( \left|\int_\tau^te^{-(t-s)A_\alpha}
\frac\partial{\partial x}(u^{\tau,
n}(s))^2ds \right|_{L^2}>\frac\eps 2 \big)\nonumber \\
&-& \mathbb{P}\big( \left|\int_\tau^te^{-(t-s)A_\alpha}g(u^{\tau,
n}(s))\,dW(s)\right|_{L^2}>\frac\eps 2 \big).\;\;
\end{eqnarray}
Invoking the Chebyshev inequality, inequality \eqref{Burkholder},
the boundedness of the operator $g$ and  the inequality $ \vert|
AB\vert|_{HS}\leq \vert|A\vert|_{HS}|B|_{\mathcal{L}(H)}$, we infer
that
\begin{eqnarray*}
&&
\hspace{-7truecm}\lefteqn{\mathbb{P}\big( |\int_\tau^te^{-(t-s)A_\alpha}g(u^{\tau,
n}(s))\,dW(s)|_{L^2}>\frac\eps 2 \big)} \\
 &\hspace{-4truecm}\lefteqn{\leq \frac{2^p}{\eps ^p}\mathbb{E}|\int_\tau^te^{-(t-s)A_\alpha}g(u^{\tau, n}(s))\,dW(s)|^p_{L^2}
 }\\
&&\hspace{-6truecm}\lefteqn{\leq \frac{2^p}{\eps ^p}\mathbb{E}
\big(\int_\tau^t\vert|e^{-(t-s)A_\alpha}g(u^{\tau, n}(s))\vert|_{HS}^2ds\big)^{\frac p2} \leq  \frac{2^p}{\eps ^p} b_0^p
\big(\int_0^{t-\tau}\vert|e^{-rA_\alpha}\vert|_{HS}^2 ds\big)^{\frac
p2}.}
\end{eqnarray*}
Since $\int_0^{T}\vert|e^{-rA_\alpha}\vert|_{HS}^2 \, ds<\infty $,
we can choose $ \tau <t$ such that

\begin{eqnarray}\label{Eq-est-P-stoch-int}
\mathbb{P}\big( |\int_\tau^te^{-(t-s)A_\alpha}g(u^{\tau,
n}(s))\,dW(s)|_{L^2}>\frac\eps 2 \big) &\leq &\frac14.
\end{eqnarray}
Furthermore, by the Chebyshev inequality, \cite[Lemma
2.11]{Brz+Debbi_2007} and \eqref{ben0}, we obtain
\begin{equation}\label{ben1}
\begin{aligned}
\mathbb{P}\left(
\left|\int_\tau^te^{-(t-s)A_\alpha}\frac\partial{\partial
x}(u^{\tau, n}(s))^2ds \right|_{L^2}> \frac\eps 2\right)
&\le\frac{4}{\eps^2}\mathbb E\left|\int_\tau^te^{-(t-s)A_\alpha}\frac\partial{\partial x}(u^{\tau, n}(s))^2ds \right|_{L^2}^2\\
&\le
\frac{4C_\alpha}{\eps^2}(t-\tau)^{1-\frac{3}{2\alpha}}\mathbb E\sup_{s\le t}\left|u^{\tau,n}(s)\right|^2_{L^2}\\
&\le \frac{4MC_\alpha}{\eps^2}(t-\tau)^{1-\frac{3}{2\alpha}}.
\end{aligned}
\end{equation}
Therefore, there exists $\tau<t$ such that
\[\mathbb{P}\left( \left|\int_\tau^te^{-(t-s)A_\alpha}\frac\partial{\partial x}(u^{\tau, n}(s))^2ds \right|_{L^2}> \frac\eps 2\right)\le\frac{1}{4}\]
and combining this result with \eqref{Eq-est-P-stoch-int} we find
that
\[\mathbb P\left(u^{\tau,n}(t,x)\in B(y,\eps)\right)\ge\frac{1}{2}.\]
Thanks to the equivalence of $\mathbb{P}$ and $\mathbb{P}_T^\ast$,
we obtain
\begin{eqnarray*}
\mathbb{P}_T^\ast\big( u^{\tau, n}(t, x) \in B(y, \eps)\big)>0.
\end{eqnarray*}
and invoking identity \eqref{Eq-Prob-u-and-u-tau-n},  we find that
\begin{eqnarray*}
\mathbb{P}\big( u(t, x) \in B(y, \eps) \big)= \mathbb{P}_T^\ast\big(
u^{\tau, n}(t, x) \in B(y, \eps)\big)>0.
\end{eqnarray*}
This proves \eqref{ineq-probability>0} for $ y \in D(A_\alpha)$. As
we observed at the beginning of the proof this implies the general
case and therefore   the proof of Theorem
\ref{Theorem-irreducibility} is complete.
\end{proof}

\section{Existence and  properties of the invariant measure}
\label{sec-measure}

The main task in this section is  to prove the existence of an
invariant measure. Our main tool is the Krylov-Bogoliubov Theorem.
Our result here is

\begin{theoremF}\label{Theo-Exist-invariant_measure}
There exists at least one  invariant measure for the transition
semigroup $\mathbf U$, corresponding to the fractional stochastic
Burgers equation \eqref{FSBE}.
\end{theoremF}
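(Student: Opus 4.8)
The plan is to apply the Krylov--Bogoliubov theorem, so the crux is to establish a uniform-in-time bound on the moments of the solution that yields tightness of the family of time-averaged transition measures. First I would fix an initial datum, say $u_0=0$ for definiteness (the Krylov--Bogoliubov averaging does not depend on the choice of a fixed starting point once the relevant estimate holds), and decompose $u(t)=v_\gamma(t)+z_\gamma(t)$ as in Section~\ref{sec-apriori}, where $z_\gamma$ solves the linear stochastic equation \eqref{Eq-z-gamma-t} and $v_\gamma$ solves the random PDE \eqref{Eq.Determ.v(t)}. The point of introducing the shift parameter $\gamma$ is that, by Lemma~\ref{lem-A-sigmaZ-gamma-epsilon-s-q}, for any prescribed $\eps>0$ we can pick $\gamma$ large enough that $\mathbb E|z_\gamma(t)|^p_{H^{\sigma,q}}\le\eps$ uniformly in $t\ge0$, simultaneously for the finitely many pairs $(\sigma,q)$ appearing in Lemma~\ref{lem-est-v-gamma} (in particular for $H^{s,q}$ with $s\in(1/q,1/2)$, $q>2$, for $H^{1-\alpha/2,2}$, and for $L^2$).

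Next I would run the energy estimate of Lemma~\ref{lem-est-v-gamma}. Bounding $|v_\gamma(t)|_{H^{\alpha/2,2}_0}^2\ge\pi^{2\alpha}|v_\gamma(t)|_{L^2}^2$ on the left (using $\lambda_1=\pi^2$ and $\alpha>1$), we get a differential inequality of the form
\begin{equation*}
\frac{d}{dt}|v_\gamma(t)|_{L^2}^2\le\bigl(-\nu\pi^{2\alpha}+C|z_\gamma(t)|_{H^{1-\alpha/2,2}}^{\frac{2\alpha}{\alpha-s_0}}\bigr)|v_\gamma(t)|_{L^2}^2+C\bigl(|z_\gamma(t)|^4_{H^{s,q}}+|z_\gamma(t)|^4_{H^{1-\alpha/2,2}}+\gamma^2|z_\gamma(t)|_{L^2}^2\bigr).
\end{equation*}
Applying Gronwall's lemma on $[0,t]$ and then taking expectations, the contribution of the ``bad'' coefficient $C|z_\gamma|^{2\alpha/(\alpha-s_0)}$ is controlled because, after choosing $\gamma$ large, its running average stays below $\nu\pi^{2\alpha}$, so the exponential Gronwall factor is integrable and bounded uniformly in $t$; the forcing terms on the right are integrable in time against that decaying exponential with a uniform-in-$t$ bound, again by Lemma~\ref{lem-A-sigmaZ-gamma-epsilon-s-q}. (Here one should be a little careful: since $|z_\gamma|^{2\alpha/(\alpha-s_0)}$ is random, one either works pathwise and then uses an exponential-moment bound for $z_\gamma$, or splits into the event where the running average of the bad term is small and its complement, whose probability is small by Markov's inequality together with the uniform $L^p$ bound.) The outcome is a constant $K$, independent of $t$, with $\mathbb E|v_\gamma(t)|_{L^2}^2\le K$ for all $t\ge0$, hence $\sup_{t\ge0}\mathbb E|u(t)|_{L^2}^2\le 2K+2\eps=:K'<\infty$.

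To upgrade boundedness in $L^2(0,1)$ to tightness I would use compactness of the embedding $H^{\alpha/2,2}_0(0,1)\hookrightarrow L^2(0,1)$ (which holds since $\alpha/2>0$) together with a uniform bound on $\mathbb E|u(t)|^2_{H^{\alpha/2,2}_0}$, or on its time average. The latter comes from integrating the energy inequality of Lemma~\ref{lem-est-v-gamma}: the term $\nu\int_0^T|v_\gamma(t)|^2_{H^{\alpha/2,2}_0}\,dt$ is dominated by $|v_\gamma(0)|^2_{L^2}$ plus the (uniformly bounded) time-integrals of the $z_\gamma$-forcing, so $\frac1T\int_0^T\mathbb E|v_\gamma(t)|^2_{H^{\alpha/2,2}_0}\,dt$ stays bounded as $T\to\infty$; adding the uniform bound on $\mathbb E|z_\gamma(t)|^2_{H^{1-\alpha/2,2}}$ and noting $1-\alpha/2>0$ gives $\sup_{T}\frac1T\int_0^T\mathbb E|u(t)|^2_{H^{\min(\alpha/2,1-\alpha/2),2}}\,dt<\infty$. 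Then, setting $R_T(\cdot)=\frac1T\int_0^T\mu_t(u_0,\cdot)\,dt$, Chebyshev's inequality together with the compact embedding shows that for each $\eta>0$ there is a compact set $\mathcal K_\eta\subset L^2(0,1)$ with $R_T(\mathcal K_\eta)\ge1-\eta$ for all $T$, i.e.\ $(R_T)_{T\ge1}$ is tight. By the Krylov--Bogoliubov theorem any weak limit point of $R_T$ as $T\to\infty$ is an invariant measure for $\mathbf U$, and such a limit point exists by Prokhorov's theorem; combined with the Feller property (already recorded in Section~\ref{sec-SFP}) this yields invariance, completing the proof. The main obstacle is the first one: making the Gronwall argument genuinely uniform in $t$ despite the random, potentially large coefficient $C|z_\gamma(t)|^{2\alpha/(\alpha-s_0)}_{H^{1-\alpha/2,2}}$ multiplying $|v_\gamma|^2_{L^2}$ — this is exactly why the $\gamma$-shift and the sharp estimate of Lemma~\ref{lem-A-sigmaZ-gamma-epsilon-s-q} are needed, and the bookkeeping there (choice of $\rho\downarrow1/\alpha$, the constraint $\sigma<(\alpha-1)/2$, and the exponential-moment control on $z_\gamma$) is the delicate part.
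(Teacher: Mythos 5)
Your overall strategy (Krylov--Bogoliubov, the decomposition $u=v_\gamma+z_\gamma$, making $z_\gamma$ small via Lemma \ref{lem-A-sigmaZ-gamma-epsilon-s-q}, and a compactness argument in a positive-order Sobolev space) matches the paper, but the central analytic step is not carried out and, as written, would fail. You propose to apply Gronwall to \eqref{Eq.est.diffv(t)} and then take expectations, which produces the factor $\exp\bigl(C\int_0^t|z_\gamma(s)|^{\frac{2\alpha}{\alpha-s_0}}_{H^{1-\frac\alpha2,2}}\,ds\bigr)$. Since $s_0>\frac12$, the exponent $\frac{2\alpha}{\alpha-s_0}$ is strictly larger than $2$, and $z_\gamma$ is a stochastic convolution with bounded integrand, i.e.\ (conditionally) Gaussian-like; a Gaussian quantity raised to a power exceeding $2$ has \emph{no} finite exponential moments, no matter how small its variance is made by increasing $\gamma$. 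So your first suggested fix (an ``exponential-moment bound for $z_\gamma$'') is unavailable --- Lemma \ref{lem-A-sigmaZ-gamma-epsilon-s-q} gives only fixed polynomial moments, and these cannot be summed into an exponential moment for powers above $2$. Your second fix (splitting on the event where the running average of the bad coefficient is small) controls probabilities but not expectations: on the complementary event the Gronwall factor can be arbitrarily large, so the claimed conclusion $\sup_{t\ge 0}\mathbb E|v_\gamma(t)|^2_{L^2}\le K$ does not follow. The same circularity then infects your tightness step, since bounding $\frac1T\int_0^T\mathbb E\bigl[|z_\gamma(t)|^{\frac{2\alpha}{\alpha-s_0}}|v_\gamma(t)|^2_{L^2}\bigr]dt$ would require higher moments of $v_\gamma$.

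The missing idea is the one in Lemma \ref{lem-bound-mean}: instead of Gronwall, one applies the logarithmic Lyapunov function $\zeta_{\gamma}(t)=\log\bigl(|v_{\gamma}(t)|^2_{L^2}\vee M\bigr)$. Differentiating $\zeta_\gamma$ divides \eqref{Eq.est.diffv(t)} by $|v_\gamma(t)|^2_{L^2}$ on the set $\{|v_\gamma(t)|^2_{L^2}\ge M\}$, so the dangerous multiplicative term $C|z_\gamma|^{\frac{2\alpha}{\alpha-s_0}}|v_\gamma|^2_{L^2}$ becomes the purely additive term $C|z_\gamma|^{\frac{2\alpha}{\alpha-s_0}}$, which needs only a first moment --- exactly what Lemma \ref{lem-A-sigmaZ-gamma-epsilon-s-q} supplies --- while the dissipative term contributes $-\nu\,\mathbf 1_{\{|v_\gamma(t)|^2_{L^2}\ge M\}}$. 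Taking expectations yields a bound on $\frac1t\int_0^t\mathbb P\bigl(|v_\gamma(s)|^2_{L^2}\ge M\bigr)ds$, i.e.\ tightness-type control of time-averaged probabilities rather than a uniform moment bound; the paper then bootstraps to $H^{2\beta,2}$ regularity (with $\beta<\frac{2\alpha-3}{4}$) through the mild formulation on $[t,t+1]$ and the smoothing estimate of Lemma \ref{lem-A-beta_seconterm}, and applies Krylov--Bogoliubov to the shifted averages $\frac1T\int_0^T\mathcal L(u(t+1))\,dt$. You correctly identified where the difficulty lies, but without the logarithmic transformation (or some substitute for it) the proof does not close.
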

To prove this theorem we start with some auxiliary lemmas.
\begin{lemmaF}\label{lem-bound-mean}
There exists $\gamma_1$ such that for every $\gamma> \gamma_1$ and
every  $\eps >0$ we can find $ M>0 $ such that
\begin{eqnarray}\label{ineq-bound-mean}
\frac1t\int_0^t\mathbb{P}\Big(|v_\gamma (s)|^2_{L^2}\geq
M\Big)\,ds<\eps ,\;\; t>0.
\end{eqnarray}
\end{lemmaF}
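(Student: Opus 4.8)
The goal is a uniform-in-$t$ control of the time-averaged probability that $|v_\gamma(s)|_{L^2}^2$ is large, where $v_\gamma = u - z_\gamma$. The natural route is to integrate the differential inequality of Lemma \ref{lem-est-v-gamma} and then use Lemma \ref{lem-A-sigmaZ-gamma-epsilon-s-q} to make all the $z_\gamma$-contributions uniformly small once $\gamma$ is large. First I would fix exponents $q \in (2,\infty)$, $s \in (1/q, 1/2)$, $s_0 \in (1/2, \alpha/2]$ as in Lemma \ref{lem-est-v-gamma}, and note that $1 - \alpha/2 < (\alpha-1)/2$ (since $\alpha > 3/2$) and $s < 1/2 < (\alpha-1)/2$, so that both Sobolev indices $1-\alpha/2$ and $s$ fall in the admissible range $[0,(\alpha-1)/2)$ of Lemma \ref{lem-A-sigmaZ-gamma-epsilon-s-q}. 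Hence for any prescribed $\delta>0$ there is $\gamma_1$ such that for $\gamma > \gamma_1$,
\[
\mathbb E |z_\gamma(s)|_{H^{1-\frac\alpha2,2}}^{\frac{2\alpha}{\alpha-s_0}} \le \delta,\quad \mathbb E|z_\gamma(s)|_{H^{s,q}}^4 \le \delta,\quad \mathbb E|z_\gamma(s)|_{H^{1-\frac\alpha2,2}}^4 \le \delta,\quad \mathbb E|z_\gamma(s)|_{L^2}^2 \le \delta,\qquad s\ge 0,
\]
uniformly in $s$.

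**Main steps.** Drop the good term $\nu|v_\gamma|^2_{H^{\alpha/2,2}_0}$ after using the Poincaré-type bound $|v_\gamma|^2_{L^2} \le \pi^{-\alpha}|v_\gamma|^2_{H^{\alpha/2,2}_0}$ to extract a negative multiple of $|v_\gamma(s)|^2_{L^2}$ on the right-hand side of \eqref{Eq.est.diffv(t)}; write the resulting scalar inequality schematically as
\[
\frac{d}{dt}|v_\gamma(t)|_{L^2}^2 \le \left(-c + C|z_\gamma(t)|_{H^{1-\frac\alpha2,2}}^{\frac{2\alpha}{\alpha-s_0}}\right)|v_\gamma(t)|_{L^2}^2 + \Phi_\gamma(t),
\]
where $c = \nu\pi^\alpha$-type constant (adjust $\nu$ so that $c>0$) and $\Phi_\gamma(t) = C|z_\gamma(t)|^4_{H^{s,q}} + C|z_\gamma(t)|^4_{H^{1-\frac\alpha2,2}} + C\gamma^2|z_\gamma(t)|^2_{L^2}$ has $\sup_t \mathbb E\,\Phi_\gamma(t) \le K(\gamma)\delta$ for an explicit constant. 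The cleanest way to get the time-averaged bound is \emph{not} to solve this ODE but to integrate directly: $|v_\gamma(t)|^2_{L^2} - |u_0|^2_{L^2} \le -c\int_0^t |v_\gamma(s)|^2_{L^2}\,ds + C\int_0^t |z_\gamma(s)|_{H^{1-\frac\alpha2,2}}^{\frac{2\alpha}{\alpha-s_0}}|v_\gamma(s)|^2_{L^2}\,ds + \int_0^t \Phi_\gamma(s)\,ds$, hence, dropping $|v_\gamma(t)|^2_{L^2}\ge 0$ and rearranging,
\[
\frac{c}{t}\int_0^t \mathbb E|v_\gamma(s)|^2_{L^2}\,ds \le \frac{|u_0|^2_{L^2}}{t} + \frac{C}{t}\int_0^t \mathbb E\!\left[|z_\gamma(s)|_{H^{1-\frac\alpha2,2}}^{\frac{2\alpha}{\alpha-s_0}}|v_\gamma(s)|^2_{L^2}\right]\,ds + \sup_{s}\mathbb E\,\Phi_\gamma(s).
\]
Then Chebyshev's inequality, $\mathbb P(|v_\gamma(s)|^2_{L^2}\ge M) \le M^{-1}\mathbb E|v_\gamma(s)|^2_{L^2}$, converts the time-averaged bound on $\mathbb E|v_\gamma(s)|^2_{L^2}$ into \eqref{ineq-bound-mean}: choose $t$ implicitly absent (the bound is for all $t>0$), take $\gamma$ large enough that $\sup_s\mathbb E\Phi_\gamma(s) < c\eps M/3$-type, then $M$ large.

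**The obstacle.** The genuinely delicate term is the mixed one, $\mathbb E[|z_\gamma(s)|_{H^{1-\alpha/2,2}}^{2\alpha/(\alpha-s_0)}|v_\gamma(s)|^2_{L^2}]$, because $z_\gamma$ and $v_\gamma$ are not independent and $|v_\gamma|^2_{L^2}$ is exactly the quantity I am trying to bound. I expect to handle this by a small-constant / Gronwall-type argument: either (i) use the a priori moment bound \eqref{cond-1} on $u$ (hence on $v_\gamma = u - z_\gamma$) together with Hölder in $\omega$ to split $\mathbb E[|z_\gamma|^{\cdots}|v_\gamma|^2] \le (\mathbb E|z_\gamma|^{p'\cdots})^{1/p'}(\mathbb E|v_\gamma|^{2p})^{1/p}$ with $p>1$ chosen so Lemma \ref{lem-A-sigmaZ-gamma-epsilon-s-q} still applies to the higher power $p'\cdot\frac{2\alpha}{\alpha-s_0}$, making this term $\le \delta^{1/p'}\cdot(\text{bounded})$; or (ii) absorb it back: since by Lemma \ref{lem-A-sigmaZ-gamma-epsilon-s-q} one can force $C|z_\gamma(s)|_{H^{1-\alpha/2,2}}^{2\alpha/(\alpha-s_0)}$ to be small \emph{on a large-probability event}, split the expectation over $\{C|z_\gamma(s)|^{\cdots} \le c/2\}$ and its complement, on the first set the coefficient of $|v_\gamma|^2$ is $\le -c/2 < 0$, on the second (small-probability) set use \eqref{cond-1} and Cauchy–Schwarz. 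I would write the argument with route (ii) since it keeps the negative drift $-c/2$ intact and yields $\frac{c}{2t}\int_0^t\mathbb E|v_\gamma(s)|^2_{L^2}\,ds \le |u_0|^2_{L^2}/t + C'\delta^{1/2}$, which tends (in $\gamma$, then $M$) to what is needed. Everything else — the ODE integration, Poincaré, Chebyshev — is routine.
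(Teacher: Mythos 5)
You correctly isolate the real difficulty --- the mixed term $\mathbb E\bigl[|z_\gamma(s)|_{H^{1-\alpha/2,2}}^{2\alpha/(\alpha-s_0)}|v_\gamma(s)|_{L^2}^2\bigr]$ --- but neither of your two routes closes it. Both (i) and (ii) end up requiring a moment bound on $v_\gamma(s)$ (a $2p$-th or fourth moment) that is \emph{uniform in $s\in[0,\infty)$}. The only a priori bound available, \eqref{cond-1}, is $\mathbb E\sup_{t\in[0,T]}|u(t)|_{L^2}^p<\infty$ for each finite $T$ with a $T$-dependent constant; nothing gives $\sup_{s\ge 0}\mathbb E|v_\gamma(s)|_{L^2}^4<\infty$, and a uniform-in-time second moment bound would already imply the lemma by Chebyshev, so assuming one is essentially circular. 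In your route (ii) the absorption of the small-coefficient part into $-c/2$ is fine, but on the complementary event you are left with a factor $\bigl(\mathbb E|v_\gamma(s)|_{L^2}^4\bigr)^{1/2}$ that is uncontrolled for large $s$, so the claimed bound $|u_0|^2_{L^2}/t+C'\delta^{1/2}$ with $C'$ independent of $t$ is unjustified. (There is also a quantifier issue with the term $\gamma^2|z_\gamma|^2_{L^2}$: Lemma \ref{lem-A-sigmaZ-gamma-epsilon-s-q} does not assert that $\gamma^2\mathbb E|z_\gamma(s)|^2_{L^2}\to 0$ as $\gamma\to\infty$, so this term must be beaten by dividing by $M$ and choosing $M>\gamma^2$, i.e.\ $M$ after $\gamma$, which is what the paper does.)

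The paper's proof sidesteps the mixed term entirely with a logarithmic Lyapunov function: it sets $\zeta_{\gamma}(t)=\log\bigl(|v_{\gamma}(t)|_{L^2}^2\vee M\bigr)$, so that $\zeta_\gamma'(t)=|v_\gamma(t)|_{L^2}^{-2}\mathbf 1_{\{|v_\gamma(t)|^2_{L^2}\ge M\}}\tfrac{d}{dt}|v_\gamma(t)|^2_{L^2}$. Inserting \eqref{Eq.est.diffv(t)}, the factor $|v_\gamma|_{L^2}^{-2}$ cancels the $|v_\gamma|_{L^2}^2$ in the mixed term (leaving just $C|z_\gamma|^{2\alpha/(\alpha-s_0)}_{H^{1-\alpha/2,2}}$, whose expectation is controlled uniformly in time by Lemma \ref{lem-A-sigmaZ-gamma-epsilon-s-q}), divides the remaining forcing terms by $M$, and turns the dissipative term into exactly $-\nu\mathbf 1_{\{|v_\gamma(t)|^2_{L^2}\ge M\}}$, whose expected time integral is precisely the quantity to be estimated. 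Taking expectations, integrating in time, and noting $\mathbb E\zeta_\gamma(t)-\mathbb E\zeta_\gamma(0)\ge 0$ once $M\ge|v_\gamma(0)|^2_{L^2}$ yields \eqref{ineq-bound-mean} with no moment information on $v_\gamma$ whatsoever. To salvage your direct integration of $|v_\gamma|^2_{L^2}$ you would first have to establish a uniform-in-time moment bound, which is exactly the difficulty this trick is designed to avoid.
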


\begin{proof}
Let us fix $\eps >0$. Let $C$ denote a constant greater than the
third power of the maximum of the embedding constant of $
H^{1-\frac\alpha2,2}(0,1)\hookrightarrow L^{2}(0,1)$, the embedding
constant of $H^{\frac\alpha2-\frac12, 2}(0,1) \hookrightarrow
H^{1-\frac\alpha2, 2}(0,1)$ and the constant $ C$ from
\eqref{Eq.est.diffv(t)}. By Lemma
\ref{lem-A-sigmaZ-gamma-epsilon-s-q}, we can find $ \gamma_0>0$ such
that, for all $ \gamma_1\geq \gamma_0$
\begin{equation}\label{Eq-supmainestimEZgamma}
\max \Big[\sup_{[0,
+\infty)}\mathbb{E}\big|z_{\gamma_1}(t)\big|^{\frac{2\alpha}{\alpha-s_0}}_{H^{\frac\alpha2-\frac{1}{2},
2} },  \sup_{[0,
+\infty)}\mathbb{E}\big|z_{\gamma_1}(t)\big|^4_{H^{\frac\alpha2-\frac{1}{2},
2} }, \sup_{[0,
+\infty)}\mathbb{E}\big|z_{\gamma_1}(t)\big|^2_{H^{\frac\alpha2-\frac{1}{2},
2} }\Big]\leq \eps \frac{\nu}{4C}
\end{equation}
Let $ M>1$. We define a process  $ \zeta_{\gamma_1}$ by  $
\zeta_{\gamma_1}(t)= \log\big(\big|v_{\gamma_1} (t)\big|^2_{L^2}\vee
M\big)$, $ t\geq 0$.  Using \eqref{Eq.est.diffv(t)} and the weak
derivative of $\zeta_{\gamma_1}(t) $, we get
\begin{eqnarray}
\zeta_{\gamma_1}'(t)&= &\frac1{|v_{\gamma_1}
(t)|^2_{L^2}}\mathbf{1}_{\{|v_{\gamma_1} (t)|^2_{L^2}\geq M\}}\frac
d{dt}|v_{\gamma_1} (t)|^2_{L^2}\nonumber \\
&\leq &\frac1{|v_{\gamma_1} (t)|^2_{L^2}}\mathbf{1}_{\{|v_{\gamma_1}
(t)|^2_{L^2}\geq M\}}\Big(-\nu\big|v_{\gamma_1}
(t)\big|_{L^{2}}^{2}+C|z_{\gamma_1}(t)|_{H^{1-\frac\alpha2,
2}}^{\frac{2\alpha}{\alpha-s_0}}\big|v_{\gamma_1}(t) \big|_{L^{2}}^{2}\nonumber\\
&+&C|z_{\gamma_1}(t)|^4_{H^{s,q}}+
C|z_{\gamma_1}(t)|^4_{H^{1-\frac\alpha2,2}}+\gamma_1^2|z_{\gamma_1}(t)|_{L^{2}}^{2}
\Big).
\end{eqnarray}
Taking expectation of the both sides, we get
\begin{eqnarray}\label{Eq-prb-estima}
\big(\mathbb{E}\zeta_{\gamma_1}(t)-\mathbb{E}\zeta_{\gamma_1}(0)\big)&+
&\nu\int_0^t\mathbb{P}\Big(|v_{\gamma_1} (s)|^2_{L^2}\geq
M\Big)ds
\\
&\leq&
\int_0^t\Big(C\mathbb{E}|z_{\gamma_1}(s)|_{H^{1-\frac\alpha2,
2}}^{\frac{2\alpha}{\alpha-s_0}}+ \frac C{M}\mathbb{E}|z_{\gamma_1}(s)|^4_{H^{s,q}}
\nonumber\\
&+&  \frac
C{M}\mathbb{E}|z_{\gamma_1}(s)|^4_{H^{1-\frac\alpha2,2}}+\frac
{C{\gamma_1}^2}{M}\mathbb{E}|z_{\gamma_1} (s)|_{L^{2}}^{2} \Big)ds.\nonumber\\
&\leq&t\Big(C\sup_{(0,\infty)}\mathbb{E}|z_{\gamma_1}(s)|_{H^{1-\frac\alpha2,
2}}^{\frac{2\alpha}{\alpha-s_0}}
+\frac C{M}\sup_{(0,\infty)}\mathbb{E}|z_{\gamma_1}(s)|^4_{H^{s,q}}\nonumber\\
&+ & \frac
C{M}\sup_{(0,\infty)}\mathbb{E}|z_{\gamma_1}(s)|^4_{H^{1-\frac\alpha2,2}}+\frac
{C{\gamma_1}^2}{M}\sup_{(0,\infty)}\mathbb{E}|z_{\gamma_1}
(s)|_{L^{2}}^{2} \Big).
\nonumber
\end{eqnarray}
It is easy to see that if $ M\geq |v(0)|^2_{L^2}$, then
\[
\mathbb{E}\zeta_{\gamma_1}(t)-\mathbb{E}\zeta_{\gamma_1}(0)\geq
0,\quad t\geq 0.\]
 Hence
\begin{eqnarray}\label{Eq-estim-time-mean-Pv-gamma}
\frac1t\int_0^t\mathbb{P}\Big(|v_{\gamma_1} (s)|^2_{L^2}\geq
M\Big)ds &\leq&\frac{C}{M\nu}\Big[M
\sup_{(0,\infty)}\mathbb{E}|z_{\gamma_1}(s)|_{H^{1-\frac\alpha2,
2}}^{\frac{2\alpha}{\alpha-s_0}}
\\
+\sup_{(0,\infty)}\mathbb{E}|z_{\gamma_1}(s)|^4_{H^{s, q}} &+ &
\sup_{(0,\infty)}\mathbb{E}|z_{\gamma_1}(s)|^4_{H^{1-\frac\alpha2,2}}+\gamma_1^2 \sup_{(0,\infty)}\mathbb{E}|z_{\gamma_1}
(s)|_{L^{2}}^{2}\Big].\nonumber
\end{eqnarray}
Let us remark that thanks to the condition $ \alpha>\frac32$, we
have $ H^{\frac\alpha2-\frac12, 2}(0,1)\hookrightarrow
H^{1-\frac\alpha2, 2}(0,1)$, hence $
|z_{\gamma_1}(s)|_{H^{1-\frac\alpha2, 2}}\leq
C|z_{\gamma_1}(s)|_{H^{\frac\alpha2-\frac12, 2}}$. Consequently,
from the formulae \eqref{Eq-supmainestimEZgamma} and for the choice
$ \frac1{q}<s<\frac\alpha2-\frac12<\frac12, \; q>\frac2{\alpha-1}$,
Lemma \ref{lem-A-sigmaZ-gamma-epsilon-s-q} and thanks to the
condition $ M> \gamma^2$, then we get

\begin{eqnarray}
\frac1t\int_0^t\mathbb{P}\Big(|v_\gamma (s)|^2_{L^2}\geq
M\Big)ds<\eps .
\end{eqnarray}
\end{proof}
The following Lemma is a slight generalization of \cite[Lemma
2.11]{Brz+Debbi_2007}.

\begin{lemmaF}\label{lem-A-beta_seconterm} For every $T>0$  there exist $ C>0$ and $\theta >0$ such
that for all $ v \in C([0, T]; L^1(0, 1))$,
\begin{equation}\label{Eq-second-term-beta}
\int_0^t\big|(-A)^{\frac{2\alpha-3}{4}}
e^{-(t-s)A_\alpha}Bv(s)\big|_{L^2}ds \leq
Ct^\theta\big|v\big|_{L^\infty([0, t];L^1(0, 1)}^2, \;\; t\geq 0.
\end{equation}
\end{lemmaF}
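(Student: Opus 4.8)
The idea is to exploit the smoothing properties of the fractional heat semigroup $(e^{-tA_\alpha})_{t\ge 0}$ together with a Sobolev embedding to absorb the derivative $B=\partial/\partial x$ and the loss coming from $v\mapsto v^2$ passing from $L^1$ to $L^2$. First I would write $Bv(s)^2 = v(s)^2$ differentiated once in space, and note that by Theorem \ref{th-Existence}-type interpolation arguments (really just the embedding $L^1\hookrightarrow H^{-\eta,2}$ valid for $\eta>\tfrac12$ in dimension one, together with $H^{0,1}\cdot H^{0,1}\hookrightarrow H^{0,1}$ for products) one has $|v(s)^2|_{L^1}=|v(s)|_{L^1}^2\le |v|_{L^\infty([0,t];L^1)}^2$; this is where the square of the norm on the right-hand side comes from. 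The operator $B$ then maps $L^1(0,1)$ into $H^{-1,1}(0,1)$, and composing with a Sobolev embedding in the scale of Bessel-potential spaces sends this into $H^{-1-\eta,2}(0,1)$ for any $\eta>\tfrac12$, i.e. into $D((-A)^{-\frac{1+\eta}{2}})$ up to equivalence of norms (using the identification \eqref{eqn-sobolev_spaces-domain-of- operator} and duality).

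Next I would estimate, for $r=t-s>0$,
\[
\big|(-A)^{\frac{2\alpha-3}{4}}e^{-rA_\alpha}Bv(s)^2\big|_{L^2}
\le \big\|(-A)^{\frac{2\alpha-3}{4}}(-A)^{\frac{1+\eta}{2}}e^{-rA_\alpha}\big\|_{\mathcal L(L^2)}\,\big|(-A)^{-\frac{1+\eta}{2}}Bv(s)^2\big|_{L^2},
\]
and use the standard analytic-semigroup bound $\|(-A)^{\theta_0}e^{-rA_\alpha}\|_{\mathcal L(L^2)}=\|(A_\alpha)^{\theta_0/\alpha\cdot\alpha}\cdots\|\le C r^{-\theta_0/\alpha}$ — more precisely, since $A_\alpha=A^{\alpha/2}$ and $(-A)^{\theta_0}e^{-rA_\alpha}=(A_\alpha)^{2\theta_0/\alpha}e^{-rA_\alpha}$, spectral calculus gives $\|(-A)^{\theta_0}e^{-rA_\alpha}\|_{\mathcal L(L^2)}\le C_{\theta_0}\,r^{-\frac{2\theta_0}{\alpha}}$ for $\theta_0\ge 0$. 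Here $\theta_0=\frac{2\alpha-3}{4}+\frac{1+\eta}{2}$, so the exponent on $r$ is $-\frac{2}{\alpha}\big(\frac{2\alpha-3}{4}+\frac{1+\eta}{2}\big)=-\frac{2\alpha-3}{2\alpha}-\frac{1+\eta}{\alpha}=-\frac{4\alpha-1-2\eta}{2\alpha}\cdot\tfrac12$; the precise value is unimportant, what matters is that for $\alpha>\tfrac32$ one can pick $\eta>\tfrac12$ close enough to $\tfrac12$ so that $\theta_0<\tfrac{\alpha}{2}$, hence the exponent on $r$ is strictly greater than $-1$ and $\int_0^t r^{-2\theta_0/\alpha}\,dr = \frac{t^{\,1-2\theta_0/\alpha}}{1-2\theta_0/\alpha} = C t^{\theta}$ with $\theta:=1-\frac{2\theta_0}{\alpha}>0$. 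Pulling $|v|_{L^\infty([0,t];L^1)}^2$ out of the time integral then yields \eqref{Eq-second-term-beta}.

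The main obstacle is the bookkeeping in the Sobolev scale: one must verify that $B:L^1(0,1)\to H^{-1,1}(0,1)$ followed by the embedding $H^{-1,1}(0,1)\hookrightarrow H^{-1-\eta,2}(0,1)$ (the dual of $H^{1+\eta,2}(0,1)\hookrightarrow L^\infty(0,1)\hookrightarrow H^{1,\infty}$-type statements, or directly $H^{\eta,2}\hookrightarrow L^\infty$ for $\eta>\tfrac12$ in one dimension) is valid, and that the identification of $H^{-1-\eta,2}$ with $D((-A)^{-(1+\eta)/2})$ — including homogeneous versus inhomogeneous and the role of the Dirichlet boundary conditions — goes through; these are exactly the kinds of points handled in \cite{R&S-96} and \cite{Brz+Debbi_2007} and invoked around inequality (C.15) there, so I would cite those rather than reprove them. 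Once the target space of $Bv(s)^2$ is correctly identified, the semigroup estimate and the time integration are routine, and the constraint $\alpha>\tfrac32$ is precisely what guarantees $\theta>0$.
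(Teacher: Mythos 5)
Your route (send $Bv$ into a negative-order Sobolev space by duality, then pay for it with the analytic smoothing of $e^{-rA_\alpha}$) is genuinely different from the paper's, which instead expands $(-A)^{\beta}e^{-(t-s)A_\alpha}Bv(s)$ in the eigenbasis $e_k=\sqrt2\sin(k\pi\cdot)$, throws the operators onto $e_k$, and bounds $|\langle v,\cos(k\pi\cdot)\rangle|\le |v|_{L^1}$ before summing the resulting series $\sum_k (k\pi)^{4\beta+2}e^{-(k\pi)^\alpha s}$ explicitly. But your version contains a concrete arithmetic error that makes the key step fail. You need $\eta>\tfrac12$ for the embedding $L^1(0,1)\hookrightarrow H^{-\eta,2}(0,1)$ (equivalently $H^{\eta,2}\hookrightarrow L^\infty$ in dimension one), and you then claim that one can choose such an $\eta$ with
\[
\theta_0=\frac{2\alpha-3}{4}+\frac{1+\eta}{2}<\frac{\alpha}{2}.
\]
Solving this inequality gives $\frac{1+\eta}{2}<\frac{\alpha}{2}-\frac{2\alpha-3}{4}=\frac34$, i.e.\ $\eta<\tfrac12$ --- exactly incompatible with the requirement $\eta>\tfrac12$. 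So for every admissible $\eta$ you get $2\theta_0/\alpha>1$ and the time integral $\int_0^t r^{-2\theta_0/\alpha}\,dr$ diverges; the proof does not close at the exponent $\frac{2\alpha-3}{4}$.

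To be fair, the endpoint is delicate in the paper as well: its computation only converges under $\frac{4\beta+3}{\alpha}<\sigma<2$, which forces $\beta<\frac{2\alpha-3}{4}$ strictly, and the final line simply ``takes'' $\beta=\frac{2\alpha-3}{4}$ without justification. Your method, done honestly, recovers exactly the same strict range: for any $\beta<\frac{2\alpha-3}{4}$ you can choose $\eta>\tfrac12$ with $\beta+\frac{1+\eta}{2}<\frac{\alpha}{2}$ and everything goes through with $\theta=1-\frac{2}{\alpha}\big(\beta+\frac{1+\eta}{2}\big)>0$. So the fix is to state and prove the bound for $\beta$ strictly below $\frac{2\alpha-3}{4}$ (which is all that is used later, cf.\ the proof of Theorem \ref{Theo-Exist-invariant_measure}) rather than pretending the endpoint is attained. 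A secondary point: the lemma as written has $Bv(s)$ on the left but $|v|^2_{L^\infty([0,t];L^1)}$ on the right; you silently replaced $v$ by $v^2$ and used $|v^2|_{L^1}=|v|_{L^2}^2$-type bookkeeping, which matches the intended application but not the literal statement --- worth flagging rather than absorbing.
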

\begin{proof}
Let us choose (and fix) $\beta >0$ and $ v \in C([0, T]; L^1(0,
1))$. Then  we have
\begin{eqnarray*}
&&\hspace{-8truecm}\lefteqn{\big|(-A)^\beta
e^{-(t-s)A_\alpha}Bv(s)\big|_{L^2}^2=
\sum_{k=1}^{+\infty}\langle(-A)^\beta
e^{-(t-s)A_\alpha}Bv(s), e_k\rangle^2} \nonumber\\
= \sum_{k=1}^{+\infty}\langle v(s), B e^{-(t-s)A_\alpha}(-A)^\beta
e_k\rangle^2
&\leq&\sum_{k=1}^{+\infty}(k\pi)^2\lambda_k^{2\beta}e^{-\lambda_k^{\frac\alpha2}(t-s)}
\langle v(s),
f_k\rangle^2, \nonumber\\
\end{eqnarray*}
where $ f_k(\cdot):= \cos(k\pi\cdot)$. Thanks to the inequality $
|\langle v, f_k\rangle|\leq |v|_{L^1}| f_k|_{L^\infty}$, we get
\begin{eqnarray*}
\int_0^t\big|(-A)^\beta e^{-(t-s)A_\alpha}Bv(s)\big|_{L^2}ds
&\leq&\int_0^t
\Big(\sum_{k=1}^{+\infty}(k\pi)^{4\beta+2}e^{-(k\pi)^{\alpha}(t-s)}\Big)^\frac12
|v(s)|_{L^1(0, 1)}ds \nonumber\\
&\leq&\sup_{[0, t]}|v(s)|_{L^1(0, 1)} \int_0^t
\Big(\sum_{k=1}^{+\infty}(k\pi)^{4\beta+2}e^{-(k\pi)^{\alpha}s}\Big)^\frac12ds. \nonumber\\
\end{eqnarray*}
Since   $ e^{-(k\pi)^{\alpha}s} \leq C_\sigma
s^{-\sigma}(k\pi)^{-\sigma\alpha} $ for  all $ s\geq 0$ and a
certain constant  $ C_\sigma>0$, we get

\begin{equation}\label{eq-secondterm-beta}
\int_0^t
\Big(\sum_{k=1}^{+\infty}(k\pi)^{4\beta+2}e^{-(k\pi)^{\alpha}s}\Big)^\frac12ds
\leq C_N
\Big(\sum_{k=1}^{+\infty}(k\pi)^{4\beta+2-\sigma\alpha}\Big)^\frac12
\int_0^ts^{-\frac \sigma2}\,ds.
\end{equation}
The series and the integral in the RHS of \eqref{eq-secondterm-beta}
converge provided $ \frac{4\beta+3}{\alpha}<\sigma<2$. Such a value
of $ \sigma$ exists provided $ \beta<\frac{2\alpha-3}{4}$. Moreover,
if $ \alpha>\frac32$, then we can choose $ 0\leq
\beta<\frac{2\alpha-3}{4}$, such that
\begin{equation}
\int_0^t\big|(-A)^\beta e^{-(t-s)A_\alpha}Bv(s)\big|_{L^2}ds \leq
Ct^{1-\frac \sigma2}\sup_{[0, t]}|v(s)|_{L^1}.
\end{equation}

To get the inequality \eqref{Eq-second-term-beta}, we take $
\theta:= 1-\frac \sigma2$ and $ \beta = \frac{2\alpha-3}{4}$.
\end{proof}

\subsection*{Proof of Theorem
\ref{Theo-Exist-invariant_measure}} We use the Krylov-Bogoliubov
method, in particular, we follow the proof in
\cite{DapratoDebussche-Temam-94, DaPrato-Gatarek-95}. Let us remark
that for all $ \theta >0$ the embedding $ H^{\theta,
2}(0,1)\hookrightarrow L^2(0,1)$ is compact, see e.g.
\cite{DapratoDebussche-Temam-94} and
\cite[p273]{DaPrato-Zabczyk-96}. Hence the ball $$ B_{H^{\theta,
2}}(0, M):=\{v\in H^{\theta, 2}(0,1), |v|_{H^{\theta, 2}}\leq M\}$$
is a compact subset of $ L^2(0,1)$.

\noindent In the first step, let us prove that for all $ \beta \in
(0, \frac{2\alpha-3}{4})$ and  $\eps
>0$ there exists $ M>0$, large enough such that  $ \frac1t\int_0^tP\Big(|(-A)^\beta
v_\gamma (s)|^2_{L^2}\geq M\Big)ds\leq \eps $, where $ v_\gamma$ is
the solution of \eqref{Eq.Determ.v(t)}. We have,
\begin{eqnarray}\label{Eq-v-gamma-integ}
v_\gamma(t+1)= e^{-A_\alpha}v_\gamma(t)&+ & \int_t^{t+1}e^{-A_\alpha
(t+1-s)}B(v_\gamma(s)+z_\gamma(s))^{2}ds\nonumber \\
&+& \gamma\int_t^{t+1}e^{-A_\alpha (t+1-s)} z_\gamma(s)ds.\;\; a.s.
\end{eqnarray}
Let $ 0< \beta <\frac{2\alpha-3}{4}$. Then
\begin{eqnarray}\label{Eq-v-gamma-integ-Abeta}
\big|(-A)^\beta v_\gamma(t+1)\big|_{L^2}&\leq
&\big|A_\alpha^{2\frac\beta\alpha}
e^{-A_\alpha}v_\gamma(t)\big|_{L^2}\nonumber
\\ &+& \int_t^{t+1}\big|(-A)^\beta e^{-A_\alpha
(t+1-s)}B(v_\gamma(s)+z_\gamma(s))^{2}\big|_{L^2}ds\nonumber \\
&+& \gamma\int_t^{t+1}\big|(-A)^\beta e^{-A_\alpha (t+1-s)}
z_\gamma(s)\big|_{L^2}ds\;\; a.s. \nonumber \\
\end{eqnarray}
We denote the three terms in the RHS of
\eqref{Eq-v-gamma-integ-Abeta} by $I_1, I_2, I_3$ respectively. It
is easy to see, using the properties of the heat semigroup that
\begin{eqnarray}\label{Eq-v-gamma-integ-Abeta-term-1}
I_1:= \big|A_\alpha^{2\frac\beta\alpha}
e^{-A_\alpha}v_\gamma(t)\big|_{L^2} \leq
C\big|v_\gamma(t)\big|_{L^2} \; \;\; \text{and } \; \;\; I_3
\leq \gamma\int_t^{t+1}\big|z_\gamma(s)\big|_{H^{2\beta, 2}}ds.\nonumber\\
\end{eqnarray}
For the second term, arguing as in the proof of Lemma
\ref{lem-A-beta_seconterm}, we get for some  $ 0<\eta<1$,
\begin{eqnarray}\label{Eq-v-gamma-integ-Abeta-term-2}
I_2 &\leq &\int_t^{t+1}\big|(-A)^\beta e^{-A_\alpha
(t+1-s)}B(v_\gamma(s))^{2}\big|_{L^2}ds\nonumber \\
&+& 2\int_t^{t+1}\big|(-A)^\beta e^{-A_\alpha
(t+1-s)}B(v_\gamma(s)z_\gamma(s))\big|_{L^2}ds\nonumber \\
&+& \int_t^{t+1}\big|(-A)^\beta e^{-A_\alpha
(t+1-s)}B(z_\gamma(s))^{2}\big|_{L^2}ds\nonumber \\
&\leq &\sup_{0\leq r\leq 1}\big|v_\gamma(t+r)\big|^{2}_{L^2} +
2\int_t^{t+1}(t+1-s)^{-\eta}\big|v_\gamma(s)\big|_{L^2}\big|z_\gamma(s))\big|_{L^2}ds \nonumber \\
&+& \int_t^{t+1}(t+1-s)^{- \eta}\big|
z_\gamma(s)\big|^{2}_{L^2}ds \nonumber \\
&\leq &c_\eta\sup_{0\leq r\leq 1}\big|v_\gamma(t+r)\big|^{2}_{L^2}+
2\int_t^{t+1}(t+1-s)^{-\eta}\big|
z_\gamma(s)\big|^{2}_{L^2}ds. \nonumber \\
\end{eqnarray}

By subsisting  \eqref{Eq-v-gamma-integ-Abeta-term-1}-
\eqref{Eq-v-gamma-integ-Abeta-term-2} on
\eqref{Eq-v-gamma-integ-Abeta}, we get
\begin{eqnarray}
\big|(-A)^\beta v_\gamma(t+1)\big|_{L^2}&\leq&
C\big|v_\gamma(t)\big|_{L^2}+
c_\eta\sup_{0\leq r\leq 1}\big|v_\gamma(t+r)\big|^{2}_{L^2}\nonumber \\
&+& 2\int_t^{t+1}(t+1-s)^{-\eta}\big| z_\gamma(s)\big|^{2}_{L^2}ds+
\gamma\int_t^{t+1}\big|z_\gamma(s)\big|_{H^{2\beta, 2}}ds
.\;\; a.s.\nonumber \\
\end{eqnarray}
Hence
\begin{eqnarray}\label{eq-est-sobolev-greater-M-BL}
\frac1T\int_0^T\mathbb{P}\Big( &|&(-A)^\beta
v_\gamma(t+1)|^{2}_{L^2}>M \Big)dt \leq \frac1T\int_0^T
\mathbb{P}\Big( \big|v_\gamma(t)\big|^{2}_{L^2}>\sqrt{\frac
M{4C}}\Big)dt\nonumber
\\
&+& \frac1T\int_0^T \mathbb{P}\Big(\sup_{0\leq r\leq
1}\big|v_\gamma(t+r)\big|^{4}_{L^2}>\frac M{4c_\eta} \Big)dt\nonumber \\
&+& \frac1T\int_0^T \mathbb{P}\Big(\int_t^{t+1}(t+1-s)^{-\eta}\big|
z_\gamma(s)\big|^{2}_{L^2}ds>\frac M{8C} \Big)dt\nonumber \\
&+&
\frac1T\int_0^T\mathbb{P}\Big(\int_t^{t+1}\big|z_\gamma(s)\big|_{H^{2\beta,
2}}ds \Big) >\frac M {4\gamma}\Big)dt.
\end{eqnarray}
From Lemma \ref{lem-bound-mean}, there  exists $ M_1>0$, such that $
\forall M> M_1$, we have
\begin{eqnarray}\label{eq-est-sobolev-1}
\frac1T\int_0^T \mathbb{P}\Big(
\big|v_\gamma(t)\big|^{2}_{L^2}>\sqrt{\frac M{4C}}\Big)dt<\frac
\eps4.
\end{eqnarray}
By Chebyshev inequality and Lemma
\ref{lem-A-sigmaZ-gamma-epsilon-s-q} and considering the choice $
M>\max\{M_1, 16\gamma, 32C\} $ and  the condition $\alpha<2$ which
guaranty that $ \beta< \frac{2\alpha-3}{4}< \frac{\alpha-1}{4}$, we
infer
\begin{eqnarray}\label{eq-est-P-T-term-1-eps-4}
\frac1T\int_0^T &\mathbb{P}&\Big(\int_t^{t+1}(t+1-s)^{-\eta}\big|
z_\gamma(s)\big|^{2}_{L^2}ds>\frac M{8C} \Big)dt \nonumber\\
&\leq & \frac {8C} M \frac1T\int_0^T\Big(\int_t^{t+1}(t+1-s)^{-
\eta}\mathbb{E}\big|z_\gamma(s)\big|^{2}_{L^2}ds
\Big)dt <  \frac\eps4 \nonumber\\
\end{eqnarray}
and
\begin{eqnarray}\label{eq-est-P-T-term-eps-4}
\frac1T\int_0^T\mathbb{P}\Big(\int_t^{t+1}\big|z_\gamma(s)\big|_{H^{2\beta,
2}}ds \Big) >\frac M {4\gamma}\Big)dt &\leq & \frac {4\gamma} M
\frac1T\int_0^T\Big(\int_t^{t+1}\mathbb{E}\big|z_\gamma(s)\big|_{H^{2\beta,
2}}ds
\Big)dt\nonumber\\
&<& \frac\eps4.\nonumber\\
\end{eqnarray}
To estimate the second term in RHS of the inequality
\eqref{eq-est-sobolev-greater-M-BL}, we apply Gronwall Lemma to the
formulae \eqref{ineq-energy-2}, then we get
\begin{eqnarray}\label{eq-estima-energy-2}
\nonumber |v_\gamma(t+s)|_{L^2}^{2} &\leq& |v_\gamma(t)|_{L^2}^{2}
e^{C\int_0^s |z_\gamma(t+r)|^{\frac{\alpha}{\alpha-1}}_{H^{s,
q}}dr}+ \int_0^s
\Big[C|z_\gamma(t+r)|^4_{H^{s,q}}\\&+&C|z_\gamma(t+r)|^4_{H^{1-\frac\alpha2,2}}+\gamma^2
C|z_\gamma(t+r)|^2_{L^2}\Big]e^{C\int_r^s
|z_\gamma(t+\xi)|^{\frac{\alpha}{\alpha-1}}_{H^{s,
q}}d\xi}dr \nonumber \\
&\leq& e^{C\int_0^1 |z_\gamma(t+r)|^{\frac{\alpha}{\alpha-1}}_{H^{s,
q}}dr}\Big[|v_\gamma(t)|_{L^2}^{2}+ \int_0^1
C|z_\gamma(t+r)|^4_{H^{s,q}}dr \nonumber \\
&+&\int_0^1C|z_\gamma(t+r)|^4_{H^{1-\frac\alpha2,2}}dr +\gamma^2
C\int_0^1|z_\gamma(t+r)|^2_{L^2}dr \Big].\nonumber \\
\end{eqnarray}
Hence,
\begin{eqnarray*}
\frac1T\int_0^T &\mathbb{P}&\Big(\sup_{0\leq r\leq
1}\big|v_\gamma(t+r)\big|^{4}_{L^2}>\frac M{4c_\eta}
\Big)dt\nonumber \leq \frac1T\int_0^T \mathbb{P}\Big(e^{C\int_0^1
|z_\gamma(t+r)|^{\frac{\alpha}{\alpha-1}}_{H^{s, q}}dr}>\sqrt{\frac
M{4c_\eta}} \Big)dt\nonumber \\
& + & \frac1T\int_0^T
\mathbb{P}\Big(|v_\gamma(t)|_{L^2}^{2}>\sqrt{\frac M{4^3c_\eta}}
\Big)dt\nonumber \\  & + & \frac1T\int_0^T \mathbb{P}\Big(\int_0^1
|z_\gamma(t+r)|^4_{H^{s,q}}dr >\sqrt{\frac
M{4^3c_\eta C^2}} \Big)dt\nonumber \\
\end{eqnarray*}
\begin{eqnarray}\label{eq-est-P-T-squar}
{}& + &\frac1T\int_0^T
\mathbb{P}\Big(\int_0^1|z_\gamma(t+r)|^4_{H^{1-\frac\alpha2,2}}dr>\sqrt{\frac
M{4^3c_\eta C^2}} \Big)dt\nonumber \\
&+& \frac1T\int_0^T \mathbb{P}\Big(
\int_0^1|z_\gamma(t+r)|^2_{L^2}dr
>\sqrt{\frac
M{4^3c_\eta\gamma^4 C^2}} \Big)dt.\nonumber \\
\end{eqnarray}
Arguing as above and use Lemmata
\ref{lem-A-sigmaZ-gamma-epsilon-s-q},  \ref{lem-bound-mean} and
Chebychev inequality, then for enough large $ M $, we get
\begin{eqnarray}\label{eq-est-P-T-squar-part-1}
\frac1T\int_0^T \mathbb{P}\Big(\sup_{0\leq r\leq
1}\big|v_\gamma(t+r)\big|^{4}_{L^2}>\frac M{4c_\eta}
\Big)dt <\frac\eps4.\nonumber \\
\end{eqnarray}
Finally, from
\eqref{eq-est-sobolev-1}--\eqref{eq-est-P-T-term-eps-4} and
\eqref{eq-est-P-T-squar-part-1}, we get
\begin{eqnarray}
\frac1T\int_0^T\mathbb{P}\Big( |(-A)^\beta
v_\gamma(t+1)|^{2}_{L^2}>M \Big)dt <\eps.
\end{eqnarray}
Let us recall that $ u = v_\gamma+ z_\gamma $, hence using
\eqref{eq-est-P-T-squar-part-1} and  Lemma
\ref{lem-A-sigmaZ-gamma-epsilon-s-q}, we prove that
\begin{eqnarray}
\frac1T\int_0^T\mathbb{P}\Big( |(-A)^\beta u(t+1)|^{2}_{L^2}>M
\Big)dt <\eps.
\end{eqnarray}
Hence the family of probability measures $ \mu_T(\cdot):=
\frac1T\int_0^T(\mathcal{L}(X_{t+1}))(\cdot)dt $ is tight. By
Porokhorov's Theorem, we can subtract a weak convergent sequence
$\mu_{T_{n_k}}(\cdot), T_{n_k}\uparrow +\infty.$ Using
Krylov-Bogoliubov existence Theorem, see e.g.
\cite{DaPrato-Zabczyk-96}, we confirm the existence of at least one
invariant measure.

\subsection*{Proof of the Theorem \ref{Main-Theo}}
The proof of the main Theorem \ref{Main-Theo} follows, in a standard
way, as a conclusion of the above results. In fact, thanks to
Theorem \ref{Theo-Exist-invariant_measure}, there exists an
invariant measure. The semigroup $\mathbf U$ is strong Feller and
irreducible for all $ t>0$, hence by the Khas'minskii Theorem the
semigroup is regular for all $ t>0$ , see e.g. \cite[Proposition
4.1.1]{DaPrato-Zabczyk-96}. As a consequence of the regularity of
the semigroup and using Doob's Theorem, we conclude that the
invariant measure is unique. The convergence in \eqref{tv} follows
immediately from \cite[Theorem 1]{lukasz} and \cite[Proposition
4.2.1]{DaPrato-Zabczyk-96}, see also \cite{ms}. Finally, the
ergodicity of the invariant measure follows from the uniqueness, see
e.g. \cite[Theorem 3.2.6]{DaPrato-Zabczyk-96}.

\vspace{3mm}

\noindent{\large\bf Acknowledgements.} The research of the second
author is supported by The Austrian Science Foundation grant
P26017002.

\end{document}